\documentclass[12 pt]{article}%
\usepackage{amsmath, amsfonts, amsthm, color,latexsym}
\usepackage{amsmath}
\usepackage{amsfonts}
\usepackage{amssymb}
\usepackage{color, soul}
\usepackage{enumerate}
\providecommand{\U}[1]{\protect\rule{.1in}{.1in}}
\newtheorem{theorem}{Theorem}[section]
\newtheorem{proposition}[theorem]{Proposition}
\newtheorem{corollary}[theorem]{Corollary}
\newtheorem{example}[theorem]{Example}

\newtheorem{remark}[theorem]{Remark}

\newtheorem{final remark}[theorem]{Final Remark}
\newtheorem{definition}[theorem]{Definition}
\allowdisplaybreaks[4]

\begin{document}
	
	%\title{\sc Weak and Weak* Dunford Pettis Multilinear Operators}
	\title{\sc On the transformation of unbounded convergent sequences by linear operators on Banach lattices}
	\author{Fabr\'icio Santos, Ewerton Torres \thanks{
			\newline\indent2020 Mathematics Subject
			Classification: 46B42, 46E05, 47B60, 47L05.\newline\indent Key words: Banach lattices, Unbounded Dunford-Pettis operators, Unbounded weak Dunford-Pettis operators}}
	\date{}
	\maketitle

	\begin{abstract}

		In this paper, we introduce a weak variant of the concept of unbounded Dunford–Pettis linear operators, which we call unbounded weak Dunford–Pettis operators. We investigate the properties of this new class, establish inclusion and coincidence results involving unbounded Dunford–Pettis operators, and examine its relationship with the class of unbounded absolutely weakly continuous operators.

%		A continuous operator $T$ between two Banach lattices $E$ and $F$. We say that $T$ is unbounded Dunford-Pettis  whenever $x_{\alpha} \xrightarrow{uaw} 0$ in $E$ implies $T(x_{\alpha}) \xrightarrow{un}$ 0. We say that $T$ is unbounded weak Dunford-Pettis whenever $x_{\alpha} \xrightarrow{uaw} x$ in $E$ and $f_{\alpha} \xrightarrow{w} f$ in $F^*$ implies $f_{\alpha}\left(|T(x_{\alpha}| \wedge u\right) \to f(|T(x)|\wedge u)$, for every $u \in F^+$. In this manuscript, we study some properties of these classes of operators and investigate their relationships with the other classes of operators alread studied in literature. In the third section, we will introduce a new classification of operators named as un-continuous operators and we will investigate on some properties of them and their relationships with other classifications of operators.

	\end{abstract}
	
	\section{Introduction and background}

Let $E$ and $F$ be two normed vector lattices. In the second section of this manuscript, we study on continuous operators $T\colon E \to F$ which carry every norm bounded and unbounded absolutely weak convergent net into a unbounded norm convergent net. The class of these operators is called the class of unbounded Dunford-Pettis continuous operators between two normed vector lattices is denoted by $\mathcal{L}_{uDP}$ (see, \cite{OCCW19}). In the third section, we will introduce a new classification of operators named as unbounded weak Dunford-Pettis operators and we will investigate on some properties of them and their relationships with other classifications of operators. This new class will be denoted by $\mathcal{L}_{uwDP}$.It is worth noting that the main motivation for constructing this new class stems from the ideas presented regarding weakly Dunford-Pettis operators and demi Weakly  Dunford-Pettis (see \cite{AB85} and \cite{RS26}).

To state our results, we need to fix some notations and recall some definitions. Throughout this paper,
the subset $E^+ = \{x \in E\text{; } x \ge 0\}$ of vector lattice $E$ is called the positive cone of $E$ and the elements of $E^+$ are called the positive elements of $E$. If $E$ is a normed space, then $E^*$ is the topological dual space of $E$ and $T^*\colon F^* \to E^*$ is the topological adjoint of continuous operator $T\colon E \to F$ between two normed spaces.

Let $E$ be a vector lattice. We say that the net $(x_{\alpha})_{\alpha} \subset E$ is
\begin{itemize}
\item {\bf weak convergent} to $x \in E$ if $x_{\alpha} \to x$ in $\sigma(E; E^*)$ We denote this convergence by $x_{\alpha} \xrightarrow{w} x$.
\item {\bf unbounded absolutely weakly convergent} to $x\in E$ if $|x_{\alpha} - x|\wedge u \xrightarrow{w} 0$ for all $u \in E^+$. We denote this convergence by $x_{\alpha} \xrightarrow{uaw} x$ (see, \cite{Z18}).
\item {\bf unbounded norm convergent} to $x\in E$ if $\||x_{\alpha} - x|\wedge u\| \to 0$ for all $u \in E^+$. We denote this convergence by $x_{\alpha} \xrightarrow{un} x$ (see \cite{DOT17, KMT17, T04}).
\end{itemize}

The Dunford–Pettis operators, a term coined by A. Grothendieck in \cite{G53}, can be regarded as a generalization of compact linear operators between Banach spaces. Aliprantis and Burkinshaw \cite{AB85} introduced weakly Dunford–Pettis operators, a weak variant of Dunford–Pettis operators that defines a broader class of operators. Subsequently, Benkhaled, Hajji, and Jeribi \cite{BHJ22} introduced demi Dunford–Pettis operators, which also generalize Dunford–Pettis operators. Building on these developments, Ribeiro and Santos \cite{RS26} introduced demi weakly Dunford–Pettis operators as a weak variant of demi-Dunford–Pettis operators. In a related direction, Ribeiro and Santos \cite{RS26(2)} introduced weakly Dunford–Pettis multilinear operators, extending their earlier work on Dunford–Pettis multilinear mappings in \cite{RST22}.

Motivated by these developments, the main objective of this paper is to introduce a new class of linear operators between Banach lattices, which we call unbounded weakly Dunford–Pettis operators. This class provides a weak variant of the class of unbounded Dunford–Pettis operators, introduced in \cite{OCCW19}. We also establish inclusion and coincidence results involving unbounded Dunford–Pettis operators.

Let $X$ and $Y$ be two Banach spaces. An operator $T\colon X \to Y$, is said to be Dunford-Pettis whenever $x_j \to 0$ in $\sigma(X; X^*)$ $($we denote this convergence by $x_j \xrightarrow{w} 0$$)$ implies $\|T(x_j)\| \to 0$. Let $E$ and $F$ be two vector lattices. An operator $T\colon E \to F$, is said to be disjointness preserving whenever $x \perp y$ in $E$ implies $Tx \perp Ty$ in $F$. If for an operator $T\colon E \to F$ between two vector lattices, $T \vee (-T)$ exists, we say its modulus $|T|$ exists. A vector $e > 0$ in vector lattice $E$ is a strong unit when the ideal $I_e$ (generated by $e$) is equal to $E$; equivalently, for every $x \ge 0$ there exists $n \in \mathbb{N}$ such that $x \le ne$. A norm on a Banach lattice $E$ is called order continuous if $\|x_{\alpha}\| \to 0$ for all net $(x_{\alpha})_{\alpha} \subset E$ with $x_{\alpha} \downarrow 0$ and we say $E$ has weakly sequentially continuous  lattices operations whenever $x_j \xrightarrow{w} 0$ implies $|x_j| \xrightarrow{w} 0$.

An element $e \in E^+$ is called an atom of a Riesz space $E$ if the principal ideal $I_e$ is one-dimensional. $E$ is called an atomic Banach lattice if it is the band generated by its atoms. Clearly, $c_0$ and $\ell_p$ ($1 \le p < \infty$) are atomic Banach lattices.

\section{Unbounded Dunford-Pettis Operators on Banach Lattices}

The concept of unbounded Dunford-Pettis operators was initially introduced in \cite{OCCW19}, where the aim was to explore this class and relate it to others introduced in the literature, as well as to new classes introduced in the same paper. Let us recall this concept. In this paper

\begin{definition}
An continuous operator $T$ from a Banach lattice $E$ into a Banach
lattice $F$ is a unbounded Dunford-Pettis operator $($uDP, for short$)$ whenever
\begin{equation*}
T(x_{\alpha}) \xrightarrow{un} 0
\end{equation*}
in $F$ for any bounded net $x_{\alpha} \xrightarrow{uaw} 0$ in $E$.
\end{definition}

In particular,
\begin{definition}
An continuous operator $T$ from a Banach lattice $E$ into a Banach
lattice $F$ is a $\sigma$-unbounded Dunford-Pettis operator $($$\sigma$-uDP, for short$)$ whenever
\begin{equation*}
T(x_n) \xrightarrow{un} 0
\end{equation*}
in $F$ for any bounded sequence $x_n \xrightarrow{uaw} 0$ in $E$.
\end{definition}

The class of all unbounded Dunford-Pettis and $\sigma$-unbounded Dunford-Pettis operators of $\mathcal{L}(E, F)$ will be denoted by $\mathcal{L}_{uDP}(E; F)$ and $\mathcal{L}_{\sigma-uDP}(E; F)$, respectively. When $E = F$ we write only $\mathcal{L}_{uDP}(E)$ and $\mathcal{L}_{\sigma-uDP}(E)$, respectively. We already know from \cite{OCCW19} that in general
\begin{equation*}
\mathcal{L}_{uDP}(E; F) \subset \mathcal{L}_{\sigma-uDP}(E; F).
\end{equation*}
%and that, if $E$ has norm order continuous then 
%\begin{equation*}
%\mathcal{L}_{uDP}(E) = \mathcal{L}_{\sigma-uDP}(E) = \mathcal{L}(E).
%\end{equation*}

An operator $T$ between two Banach lattices $E$ and $F$ is said to be unbounded
norm continuous (or, un-continuous for short) whenever $x_{\alpha} \xrightarrow{un} 0$ implies
$T(x_{\alpha}) \xrightarrow{un} 0$, for $(x_{\alpha})_{\alpha} \subset E$. For sequence $(x_j)_{j=1}^{\infty} \subset E$, if $x_j \xrightarrow{un} 0$ implies
$T(x_j) \xrightarrow{un} 0$, then $T$ is called a $\sigma$-unbounded norm continuous operator (or,
$\sigma$-un-continuous for short). For more details about this class, we reference \cite{MAA19}. The collection of all unbounded norm continuous operators of $\mathcal{L}(E, F)$ will be denoted by $\mathcal{L}_{un}(E, F)$, , and its sequential case by $\mathcal{L}_{\sigma-un}(E, F)$.. It is simple to see that $\mathcal{L}_{uDP}(E; F) \subset \mathcal{L}_{un}(E; F)$.

 %A continuous operator $T\colon E \to F$ between two Banach lattices is said ($\sigma$-)unbounded
%norm continuous (($\sigma$-)un-continuous, for short), whenever $x_{\alpha} \xrightarrow{un} 0$ ($x_j \xrightarrow{un} 0$) in $E$ implies $T(x_{\alpha}) \xrightarrow{un} 0$ ($T(x_j) \xrightarrow{un} 0$) in $F$, for more details see \cite{WC22}.

%\begin{definition}
%Let $E$ be a Banach lattice. 
%\begin{enumerate}
%\item[$(a)$] We say that $E$ has the unbounded Schur property when
%\begin{equation*}
%x_{\alpha} \xrightarrow{uaw} 0 \iff x_{\alpha} \xrightarrow{un} 0.
%\end{equation*}
%\item[$(b)$] We say that $E$ has the $\sigma$-unbounded Schur property when
%\begin{equation*}
%x_n \xrightarrow{uaw} 0 \iff x_n \xrightarrow{un} 0.
%\end{equation*}
%as $n \to \infty$.
%\end{enumerate}
%\end{definition}

%The first result we will present regarding this class is the following:

It follows immediately from \cite[Theorem $4$]{Z18} the following result.

\begin{proposition}\label{unuDP}
Let $E$ be a Banach lattice. Then the following statements are equivalent:
\begin{enumerate}
\item[$(a)$] Every un-continuous operator $T \in \mathcal{L}(E)$ is a unbounded Dunford-Pettis;
\item[$(b)$] The identity $Id_E \colon E \to E$ is a unbounded Dunford-Pettis operator;
\item[$(c)$] $E$ has norm order continuous.
\end{enumerate}
\end{proposition}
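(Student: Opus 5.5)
The plan is to prove the cycle $(a)\Rightarrow(b)\Rightarrow(c)\Rightarrow(a)$, with \cite[Theorem $4$]{Z18} doing most of the work. That theorem states that a Banach lattice $E$ is order continuous if and only if every norm bounded uaw-null net in $E$ is un-null. In other words, uaw and un convergence agree on bounded nets exactly when $E$ has order continuous norm.

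For $(a)\Rightarrow(b)$, it suffices to note that $Id_E$ is trivially un-continuous: if $x_\alpha \xrightarrow{un} 0$, then $Id_E(x_\alpha)=x_\alpha \xrightarrow{un} 0$. By $(a)$, $Id_E$ is therefore unbounded Dunford--Pettis.

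For $(b)\Rightarrow(c)$, we unwind the definition of uDP for $Id_E$. It says precisely that every bounded net $x_\alpha \xrightarrow{uaw} 0$ in $E$ satisfies $x_\alpha \xrightarrow{un} 0$. The ``if'' direction of \cite[Theorem $4$]{Z18} then gives that the norm of $E$ is order continuous. This is the only step with real content, and it is entirely carried by Zabeti's theorem. If one wanted to avoid citing it, one would argue by contradiction: if $E$ is not order continuous, there is a bounded disjoint sequence $(x_n)$ that is not norm null (by the standard characterization of order continuity, e.g.\ in Meyer-Nieberg). Disjoint sequences are uaw-null in any Banach lattice, while the non-norm-null disjoint sequence supplied by the non-order-continuous structure can be chosen order bounded, say $0\le x_n\le u$. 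Then $|x_n|\wedge u = x_n \not\to 0$ in norm, so $(x_n)$ is not un-null. I expect this to be the only delicate point.

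For $(c)\Rightarrow(a)$, let $T\in\mathcal{L}(E)$ be un-continuous and let $(x_\alpha)$ be a bounded net with $x_\alpha \xrightarrow{uaw} 0$. Since $E$ is order continuous, the ``only if'' direction of \cite[Theorem $4$]{Z18} (in order continuous lattices uaw and un convergence coincide, even without boundedness) gives $x_\alpha \xrightarrow{un} 0$. Un-continuity of $T$ then yields $T(x_\alpha)\xrightarrow{un}0$, so $T$ is uDP. This closes the cycle.
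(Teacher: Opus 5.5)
Your proof is correct and is essentially the paper's argument. The paper just says the equivalence follows immediately from \cite[Theorem $4$]{Z18}, and you organize that same citation as the cycle $(a)\Rightarrow(b)\Rightarrow(c)\Rightarrow(a)$. Your self-contained fallback for $(b)\Rightarrow(c)$ is also valid and worth keeping: an order bounded disjoint sequence $0\le x_n\le u$ with $\|x_n\|\not\to 0$ is norm bounded and uaw-null, yet $|x_n|\wedge u=x_n$ is not norm null, and this shows directly that restricting uDP to bounded nets costs nothing.
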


Since $\ell_1$ has norm order continuous, by Proposition \ref{unuDP}, $Id_{\ell_1} \in \mathcal{L}_{uDP}(\ell_1)$, however, the $Id_{\ell_1}^{*} = Id_{\ell_{\infty}}\colon \ell_{\infty} \to \ell_{\infty}$ is not a unbounded Dunford-Pettis. Indeed, note that $e_j \xrightarrow{uaw} 0$ in $\ell_{\infty}$ and $(1, 1, 1,\dots,) \in \ell_{\infty}^+$, thus
\begin{equation*}
\||Id_{\ell_{\infty}}(e_j)|\wedge (1, 1, 1,\dots)\|_{\infty} = \||e_j|\wedge (1, 1, 1,\dots)\|_{\infty} = \|e_j\|_{\infty} =1 \not\rightarrow 0. 
\end{equation*}

%It follows immediately from \cite[Theorem $4$]{Z18} that if $E$ has norm order continuous then $Id_E$ is unbounded Dunford-Pettis, and thus, $E$ has  the unbounded Schur property. 

As an immediate consequence of \cite[Proposition $2.5.23$]{MN91}, \cite[Theorem $2.3$]{KMT17} and \cite[Theorem $7$]{Z18}, we can see that the following proposition holds.

\begin{proposition}
Let $E$ and $F$ be Banach lattices. Then
\begin{enumerate}
\item[$(a)$] If $E^*$ has norm order continuous then every Dunford-Pettis operator $T\colon E \to F$  is $\sigma$-unbounded Dunford-Pettis operator.
\item[$(b)$] If $E$ has weakly sequentially continuous  lattices operations and $F$ is a Banach lattice with strong unit then every $\sigma$-unbounded Dunford-Pettis operator $T\colon E \to F$ is Dunford-Pettis operator.
\end{enumerate}
\end{proposition}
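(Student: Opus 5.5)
Both parts come from the three cited results, so the plan is to see what each one supplies and chain them together. I read the cited results as follows; I am recalling their content rather than quoting it. \cite[Proposition 2.5.23]{MN91}: if $E^*$ is order continuous, every disjoint bounded sequence of $E$ is weakly null. \cite[Theorem 7]{Z18}: in a Banach lattice with order continuous dual, a bounded sequence is uaw-null if and only if it is weakly null up to a disjoint perturbation; equivalently, uaw-null bounded sequences become weakly null in that setting. \cite[Theorem 2.3]{KMT17}: un-convergence coincides with norm convergence exactly when $F$ has a strong unit.

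For $(a)$, I take a bounded sequence $x_n \xrightarrow{uaw} 0$ in $E$. Since $E^*$ has order continuous norm, I would combine \cite[Theorem 7]{Z18} with \cite[Proposition 2.5.23]{MN91} to conclude that $x_n \xrightarrow{w} 0$. The Dunford--Pettis property of $T$ then gives $\|Tx_n\|\to 0$. Finally, for every $u\in F^+$,
\[
\||Tx_n|\wedge u\| \le \|Tx_n\| \to 0,
\]
so $Tx_n \xrightarrow{un} 0$ and $T$ is $\sigma$-uDP.

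The only delicate point is making sure that Zabeti's theorem gives weak nullness of bounded uaw-null sequences under order continuity of $E^*$. If the theorem is stated only as "uaw-null and bounded implies $x_n\wedge u$-type weak convergence", I would reduce to $x_n^+$ and $x_n^-$. I would then use that a bounded sequence which is uaw-null is weakly null modulo a disjoint sequence, and disjoint bounded sequences are weakly null by \cite[2.5.23]{MN91}.

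For $(b)$, I take $x_n \xrightarrow{w} 0$ in $E$; the sequence is bounded by the uniform boundedness principle. Because the lattice operations are weakly sequentially continuous, $|x_n| \xrightarrow{w} 0$. Hence $|x_n|\wedge u \xrightarrow{w} 0$ for each $u\in E^+$, since $0\le |x_n|\wedge u\le |x_n|$ and positive functionals span $E^*$. So $x_n \xrightarrow{uaw} 0$ and is bounded, and the $\sigma$-uDP hypothesis yields $Tx_n \xrightarrow{un} 0$ in $F$. Since $F$ has a strong unit $e$, \cite[Theorem 2.3]{KMT17} turns un-convergence into norm convergence: $|Tx_n|\le \|Tx_n\|$-scaled multiples of $e$ are controlled by $|Tx_n|\wedge e$. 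Thus $\|Tx_n\|\to 0$, and $T$ is Dunford--Pettis.

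The expected obstacle is the first step of $(a)$, identifying the precise form of \cite[Theorem 7]{Z18}; everything else is direct from the definitions.
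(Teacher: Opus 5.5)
Your proof is correct and follows the argument the paper relies on. The paper only cites the three references, but the same steps appear in its proofs of the next two propositions. In $(a)$, \cite[Theorem 7]{Z18} makes a bounded uaw-null sequence weakly null, the Dunford--Pettis property gives $\|Tx_n\|\to 0$, and $\||Tx_n|\wedge u\|\le\|Tx_n\|$ finishes. In $(b)$, weak sequential continuity of the lattice operations turns weakly null into uaw-null, and \cite[Theorem 2.3]{KMT17} converts un-convergence into norm convergence.

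Two minor corrections:
\begin{enumerate}
\item[(i)] \cite[Proposition 2.5.23]{MN91} is, as the paper uses it, the fact that atomic order continuous Banach lattices have weakly sequentially continuous lattice operations. It is not needed here, since Zabeti's theorem alone gives weak nullness, so your disjoint-perturbation fallback is superfluous.
\item[(ii)] Your phrase about ``$\|Tx_n\|$-scaled multiples of $e$'' is not an argument. Either just invoke the cited theorem, or use that a strong unit $e$ makes the norm equivalent to $\|y\|_e=\inf\{\lambda\ge 0: |y|\le\lambda e\}$, together with the fact that $|y|\wedge e\le\varepsilon e$ for some $\varepsilon<1$ forces $|y|\le\varepsilon e$.
\end{enumerate}
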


It is important to note that the conditions required in the previous proposition do not guarantee reflexivity, which tells us that this result is valid for conditions stronger than reflexivity; however, if we restrict ourselves only to $\sigma$-un-continuous operators, it was shown in\cite[Theorem 4.4]{OCCW19}  that the stataments are equivalents
\begin{enumerate}
\item[$(a)$] $E$ is reflexive;
\item[$(b)$] $T\colon E \to F$ is Dunford-Pettis operator if, and only if, is unbounded Dunford-Pettis operator.
\end{enumerate}

\begin{proposition}\label{uDP+DP = uDP}
	Let $E$ and $F$ be  Banach lattices with the norm of $E^*$ order continuous and  $T \colon E \rightarrow F$ be an $\sigma$-unbounded Dunford-Pettis operator. If $S \colon E \rightarrow F$ is Dunford-Pettis, then the operator $T + S$ is  a  $\sigma$-unbounded Dunford-Pettis operator.
\end{proposition}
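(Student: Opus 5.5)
The argument combines part $(a)$ of the preceding proposition with the fact that un-convergence is compatible with addition. Since the norm of $E^*$ is order continuous, part $(a)$ of the previous proposition shows that the Dunford-Pettis operator $S$ is $\sigma$-unbounded Dunford-Pettis. It then suffices to show that $\mathcal{L}_{\sigma-uDP}(E;F)$ is closed under sums, i.e.\ that $T+S$ sends bounded uaw-null sequences to un-null sequences.

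Fix a bounded sequence $(x_n)_{n}\subset E$ with $x_n \xrightarrow{uaw} 0$. By hypothesis on $T$ and by the step above for $S$, we have $T(x_n)\xrightarrow{un}0$ and $S(x_n)\xrightarrow{un}0$ in $F$. Fix $u\in F^+$. The key lattice inequality is
\begin{equation*}
|a+b|\wedge u \le (|a|+|b|)\wedge u \le |a|\wedge u + |b|\wedge u, \qquad a,b\in F,
\end{equation*}
where the second step is the standard subadditivity of $y\mapsto y\wedge u$ on $F^+$. Applying it with $a=T(x_n)$ and $b=S(x_n)$, and using that the norm of $F$ is a lattice norm, gives
\begin{equation*}
\||(T+S)(x_n)|\wedge u\| \le \||T(x_n)|\wedge u\| + \||S(x_n)|\wedge u\| \longrightarrow 0 .
\end{equation*}
Hence $(T+S)(x_n)\xrightarrow{un}0$. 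Since $T+S$ is clearly continuous, it is $\sigma$-unbounded Dunford-Pettis.

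The only step carrying real content is the first one, the passage from Dunford-Pettis to $\sigma$-uDP for $S$. That step is supplied by the earlier proposition, which rests on \cite[Proposition 2.5.23]{MN91}, \cite[Theorem 2.3]{KMT17} and \cite[Theorem 7]{Z18}, so no new work is needed there. If one wanted to avoid citing it, one would prove directly that, when $E^*$ is order continuous, a bounded uaw-null sequence is weakly null, so that $\|S(x_n)\|\to 0$ and in particular $S(x_n)\xrightarrow{un}0$. That would be the main obstacle. The remaining lattice estimate is routine.
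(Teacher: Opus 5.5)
Your proof is correct and follows essentially the same route as the paper. The paper argues directly that a bounded uaw-null sequence is weakly null by \cite[Theorem $7$]{Z18}, so $\|S(x_j)\|\to 0$ and hence $S(x_j)\xrightarrow{un}0$; this is exactly the content of part $(a)$ of the preceding proposition that you cite, and your estimate $|a+b|\wedge u\le |a|\wedge u+|b|\wedge u$ supplies the additivity of un-convergence that the paper leaves implicit.
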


\begin{proof}
Let $(x_j)_{j=1}^{\infty}$ be a sequence in $E$ with $x_j \xrightarrow{uaw} 0$. Since $E^*$ has norm order continuous by \cite[Theorem $7$]{Z18}, $x_j \xrightarrow{w} 0$. Thus, $\|S(x_j)\| \to 0$ in $F$ what implies that $S(x_j) \xrightarrow{un} 0$ in $F$. As $T$ is $\sigma$-unbounded Dunford-Pettis, then $T+S \in \mathcal{L}_{\sigma-uDP}(E; F)$.
%\begin{equation*}
%(T+S)(x_j) = T(x_j) + S(x_j) \xrightarrow{un} 0.
%\end{equation*}
\end{proof}

\begin{proposition}\label{uDP+DP = DP}
	Let $E$  be a Banach lattice with  weakly sequentially continuous  lattices operations and $F$ be a Banach lattice with strong order unit. Considere  $T \colon E \rightarrow F$ be an $\sigma$-unbounded Dunford-Pettis operator and $S \colon E \rightarrow F$ be a Dunford-Pettis operator, then the operator $T + S$ is  a  Dunford-Pettis operator.
\end{proposition}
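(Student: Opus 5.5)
The plan is to reduce everything to part $(b)$ of the preceding proposition. Under the present hypotheses ($E$ has weakly sequentially continuous lattice operations and $F$ has a strong unit), that result says every $\sigma$-unbounded Dunford-Pettis operator $T\colon E\to F$ is already Dunford-Pettis. Since the Dunford-Pettis operators form a vector subspace of $\mathcal{L}(E;F)$, the sum $T+S$ will then be Dunford-Pettis.

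Concretely, take a sequence $(x_j)_{j=1}^{\infty}\subset E$ with $x_j\xrightarrow{w}0$.

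First I will show $T(x_j)\to 0$ in norm. Weak sequential continuity of the lattice operations gives $|x_j|\xrightarrow{w}0$. For every $u\in E^+$ we have $0\le |x_j|\wedge u\le |x_j|$. To get $|x_j|\wedge u\xrightarrow{w}0$, I would use \cite[Theorem $7$]{Z18}-type arguments, or directly note that a weakly null sequence is norm bounded and uaw-null under these hypotheses. Alternatively I will simply cite the proof of part $(b)$ of the preceding proposition, which combines \cite[Proposition $2.5.23$]{MN91}, \cite[Theorem $2.3$]{KMT17} and \cite[Theorem $7$]{Z18}. In either case $x_j\xrightarrow{uaw}0$, and the sequence is bounded by the uniform boundedness principle. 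Since $T$ is $\sigma$-uDP, $T(x_j)\xrightarrow{un}0$ in $F$. Now let $e$ be a strong unit of $F$. The sequence $(T(x_j))$ is norm bounded, say by $M$. In a Banach lattice with strong unit, the norm of $F$ is equivalent to the order-unit norm of $e$, which is \cite[Theorem $2.3$]{KMT17}: un-convergence coincides with norm convergence there. Hence $\|T(x_j)\|\to 0$.

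Second, $\|S(x_j)\|\to 0$ because $S$ is Dunford-Pettis. The triangle inequality then gives $\|(T+S)(x_j)\|\le \|T(x_j)\|+\|S(x_j)\|\to 0$, which proves the claim.

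The main obstacle is the step $x_j\xrightarrow{w}0\Rightarrow x_j\xrightarrow{uaw}0$ in $E$. Here I expect to argue that $|x_j|\xrightarrow{w}0$, together with $0\le |x_j|\wedge u\le |x_j|$, forces $f(|x_j|\wedge u)\to0$ for every $f\in (E^*)^+$, since $0\le f(|x_j|\wedge u)\le f(|x_j|)\to 0$. Decomposing an arbitrary $f=f^+-f^-$ then finishes it. The strong-unit step, which identifies un-convergence with norm convergence, I will take from \cite[Theorem $2.3$]{KMT17}.
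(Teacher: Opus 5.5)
Your proposal is correct and follows the paper's proof: weak nullity together with weakly sequentially continuous lattice operations gives uaw-nullity, then the $\sigma$-uDP property and the strong unit of $F$ (via \cite[Theorem $2.3$]{KMT17}) give $\|T(x_j)\|\to 0$, and the Dunford--Pettis property of $S$ finishes the argument. Two details you supply are only implicit in the paper: the explicit check that $|x_j|\wedge u\xrightarrow{w}0$ via $0\le f(|x_j|\wedge u)\le f(|x_j|)$, and the remark that weakly null sequences are bounded, which the definition of $\sigma$-uDP requires.
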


\begin{proof}
Let $(x_j)_{j=1}^{\infty}$ be a sequence in $E$ with  $x_j \xrightarrow{w} 0$. Since, $E$ has weakly sequentially continuous  lattices operations then $x_j \xrightarrow{uaw} 0$ and hence $T(x_j) \xrightarrow{un} 0$ in $F$. As $F$ has strong order unit then $T(x_j) \to 0$. Therefore, since $S$ is a Dunford-Pettis operator then $T+S$ is a Dunford-Pettis operator.
\end{proof}

In the next result, we study a condition for the matrix operator
\begin{equation*}
 \tilde{T} = \left(\begin{matrix}
		T_1 &T_2\\
		T_3 &T_4
	\end{matrix}\right)
\end{equation*}
 to be $\sigma$-unbounded Dunford-Pettis operator, where $\tilde{T}$ is an operator acting on the Banach space $E \times F$, with $E$ and $F$ being Banach lattices, $T_1 \colon E \rightarrow E$, $T_4 \colon F \rightarrow F$, $T_2 \colon F \rightarrow E$ and $T_3 \colon E \rightarrow F$.

 \begin{proposition}
 	Let $E$ be a Banach lattice with norm order continuous and $F$ be a Banach lattice with the norm of $F^*$ order continuous. Consider $\tilde{T} \colon E \times F \rightarrow E \times F$ given by
 	\begin{equation*}
 		\tilde{T} = \left(\begin{matrix}
 			T_1 &T_2\\
 			T_3 &T_4
 		\end{matrix}\right)
 	\end{equation*}
 If $T_1$ and $T_4$ are $\sigma$-unbounded Dunford-Pettis operator and $T_2$ is Dunford-Pettis operator and $T_3$ is a $\sigma$-un-continuous, then $\tilde{T}$ is  $\sigma$-unbounded Dunford-Pettis operator.
 \end{proposition}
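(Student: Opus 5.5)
We take a bounded sequence $(x_j,y_j)_{j=1}^{\infty}$ in $E\times F$ with $(x_j,y_j)\xrightarrow{uaw}0$, regarding $E\times F$ as a Banach lattice with the coordinatewise order and a lattice norm such as $\|(x,y)\|=\max\{\|x\|,\|y\|\}$. We must show
\begin{equation*}
\tilde{T}(x_j,y_j)=\bigl(T_1x_j+T_2y_j,\;T_3x_j+T_4y_j\bigr)\xrightarrow{un}0 .
\end{equation*}

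\textbf{Step 1 (reduction to coordinates).} Since lattice operations act coordinatewise, for $(u,v)\ge 0$ we have $|(x_j,y_j)|\wedge(u,v)=(|x_j|\wedge u,\,|y_j|\wedge v)$. The dual of $E\times F$ is $E^*\times F^*$, so weak convergence is coordinatewise. Hence $x_j\xrightarrow{uaw}0$ in $E$ and $y_j\xrightarrow{uaw}0$ in $F$, and both sequences are bounded. In the same way, un-convergence in $E\times F$ is equivalent to un-convergence of each coordinate. Finally, un-convergence is additive, because
\begin{equation*}
|a+b|\wedge u\le |a|\wedge u+|b|\wedge u .
\end{equation*}
It therefore suffices to show that each of $T_1x_j$, $T_2y_j$, $T_3x_j$, $T_4y_j$ is un-null in its target space.

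\textbf{Step 2 (the four terms).} The diagonal terms are immediate: $T_1x_j\xrightarrow{un}0$ and $T_4y_j\xrightarrow{un}0$ because $T_1,T_4$ are $\sigma$-uDP. For $T_2$, the norm of $F^*$ is order continuous, so by \cite[Theorem $7$]{Z18} (as in the proof of Proposition \ref{uDP+DP = uDP}) the bounded uaw-null sequence $(y_j)$ is weakly null. Since $T_2$ is Dunford--Pettis, $\|T_2y_j\|\to 0$, and norm convergence implies un-convergence. For $T_3$, the norm of $E$ is order continuous, so by \cite[Theorem $4$]{Z18}, equivalently Proposition \ref{unuDP}$(b)$, the identity of $E$ is uDP. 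Hence $x_j\xrightarrow{un}0$ in $E$, and the $\sigma$-un-continuity of $T_3$ gives $T_3x_j\xrightarrow{un}0$ in $F$.

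\textbf{Main obstacle.} The only delicate point is Step 1: checking carefully that uaw and un convergence in the product lattice $E\times F$ split coordinatewise. This needs the identification $(E\times F)^*=E^*\times F^*$ and the fact that the product norm is a lattice norm equivalent to the sum of the coordinate norms. Once that is established, Step 2 is a direct application of Proposition \ref{unuDP}, \cite[Theorem $7$]{Z18} and the hypotheses.
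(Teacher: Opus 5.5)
Your proposal is correct and follows essentially the same route as the paper. Like the paper, you split the uaw-null sequence into its coordinates and treat $T_1,T_4$ by the $\sigma$-uDP hypothesis, $T_2$ by \cite[Theorem $7$]{Z18} together with the Dunford--Pettis property, and $T_3$ by \cite[Theorem $4$]{Z18} together with $\sigma$-un-continuity, then combine them with $|a+b|\wedge u\le |a|\wedge u+|b|\wedge u$; your version is slightly more careful, since it records the boundedness needed for \cite[Theorem $7$]{Z18} and avoids the mix-up of $u$ and $v$ in the paper's final estimate.
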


\begin{proof}
Let $((x_j, y_j))_{j=1}^{\infty}$ a sequence in $E \times F$ such that $(x_j, y_j) \xrightarrow{uaw} (0, 0)$. It is easy see that $x_j \xrightarrow{uaw} 0$ in $E$ and $y_j \xrightarrow{uaw} 0$ in $F$. Thus,  
\begin{equation*}
T_1(x_j) \xrightarrow{un} 0 \text{ and } T_4(y_j) \xrightarrow{un} 0.
\end{equation*}
Since, $E$ and $F^*$ has norm order continuous then
\begin{equation*}
x_j \xrightarrow{uaw} 0 \Rightarrow x_j \xrightarrow{un} 0 \text{ and } y_j \xrightarrow{uaw} 0 \Rightarrow y_j \xrightarrow{w} 0.
\end{equation*}
Since, $T_2$ is a Dunford-Pettis and $T_3$ is $\sigma$-un-continuous operator then
\begin{equation*}
T_2(y_j) \to 0 \Rightarrow T_2(y_j) \xrightarrow{un} 0 \text{ and } T_3(x_j) \xrightarrow{un} 0.
\end{equation*}
So, for every $(u, v) \in E^+ \times F^+$
\begin{align*}
&\left\||\tilde{T}(x_j, y_j)| \wedge (u, v)\right\|\\
&= \left\||(T_1(x_j) + T_2(y_j), T_3(x_j)+T_4(y_j))| \wedge (u, v)\right\|\\
&= \left\|(|T_1(x_j) + T_2(y_j)|, | T_3(x_j)+T_4(y_j)|) \wedge (u, v)\right\|\\
&\le \left\|(|T_1(x_j)|\wedge u + |T_2(y_j)|\wedge u, |T_3(x_j)|\wedge v + |T_4(y_j)|\wedge v)\right\|\\
&\le \left\||T_1(x_j)|\wedge v \right\| + \left\||T_2(y_j)|\wedge u\right\| + \left\||T_3(x_j)|\wedge v \right\| + \left\||T_4(y_j)|\wedge u\right\| \to 0.
\end{align*}
Then, $\tilde{T}(x_j, y_j) \xrightarrow{un} (0, 0)$. Hence, $\tilde{T}$ is a $\sigma$-unbounded Dunford-Pettis operator.
\end{proof}

\begin{theorem}
Let $E$ be a Banach lattice whose dual $E^*$ has norm order continuous, and let $F$ be an atomic Banach lattice with norm order continuous. Then every bounded linear operator $T\colon E\to F$ is unbounded Dunford-Pettis. Consequently,
\begin{equation*}
\mathcal{L}(E;F)=\mathcal{L}_{uDP}(E;F).
\end{equation*}
\end{theorem}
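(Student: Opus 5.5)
Given a bounded net $x_\alpha \xrightarrow{uaw} 0$ in $E$, the plan is to pass through weak convergence. First, since $E^*$ has order continuous norm, \cite[Theorem $7$]{Z18} (used above in the proof of Proposition \ref{uDP+DP = uDP}) gives that norm bounded uaw-null nets in $E$ are weakly null, so $x_\alpha \xrightarrow{w} 0$. The boundedness hypothesis in the definition of uDP is exactly what this step needs. Because $T$ is bounded, it is weak-to-weak continuous, hence $T(x_\alpha) \xrightarrow{w} 0$ in $F$ and $(T(x_\alpha))_\alpha$ is norm bounded, say by $M\|T\|$.

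It remains to show that in an atomic Banach lattice $F$ with order continuous norm, a bounded weakly null net is un-null. Fix $u \in F^+$ and $\varepsilon>0$. Since $F$ has order continuous norm and is the band generated by its atoms, the atoms $(e_i)_{i\in I}$ are a complete disjoint system, and $u$ is the norm limit of its finite-rank band projections. Thus we may choose a finite set $J \subset I$ with $\|u - P_J u\| < \varepsilon$, where $P_J$ is the band projection onto $\mathrm{span}\{e_i : i\in J\}$. Writing $y_\alpha = T(x_\alpha)$, we use $|y_\alpha| \wedge u \le |y_\alpha|\wedge P_J u + (u - P_J u)$ and $|y_\alpha|\wedge P_Ju \le |P_J y_\alpha|$, where the latter holds because $P_J u$ lies in the band $B_J$ and $P_J$ is a lattice homomorphism. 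This gives
\begin{equation*}
\||y_\alpha|\wedge u\| \le \|P_J y_\alpha\| + \varepsilon .
\end{equation*}
Now $P_J y_\alpha = \sum_{i\in J} f_i(y_\alpha) e_i$, where $f_i \in F^*$ are the coordinate functionals associated with the atoms. These functionals are continuous because band projections in a Banach lattice are bounded and the band generated by an atom is one-dimensional. Since $J$ is finite and $y_\alpha \xrightarrow{w} 0$, we get $\|P_J y_\alpha\| \to 0$. Hence $\limsup_\alpha \||y_\alpha|\wedge u\| \le \varepsilon$ for every $\varepsilon>0$, so $T(x_\alpha) \xrightarrow{un} 0$. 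Boundedness of $(y_\alpha)$ is not even needed here, since the reduction to finitely many coordinates handles it. The equality $\mathcal{L}(E;F)=\mathcal{L}_{uDP}(E;F)$ then follows, since the reverse inclusion is trivial.

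The main obstacle is the first step. \cite[Theorem $7$]{Z18} may be stated for sequences, while here we need nets. If it only covers sequences, I would instead argue directly: for $f \in (E^*)^+$, order continuity of $E^*$ gives that $f$ is approximated on bounded sets by functionals supported near a positive element. Concretely, for bounded $(x_\alpha)$ and $\varepsilon>0$ there is $u\in E^+$ with $f(|x_\alpha| - |x_\alpha|\wedge u) < \varepsilon$ uniformly in $\alpha$, which is the standard lemma behind Zabeti's result. Then $f(|x_\alpha|) \le f(|x_\alpha|\wedge u) + \varepsilon \to \varepsilon$, so $|x_\alpha| \xrightarrow{w} 0$ and hence $x_\alpha \xrightarrow{w} 0$ for nets as well. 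The second step is routine once the atomic decomposition of $u$ by finitely many atoms is in hand, which is where order continuity of the norm of $F$ is used.
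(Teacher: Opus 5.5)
Your proof is correct and follows the same route as the paper. Order continuity of $E^*$ turns a bounded uaw-null net into a weakly null one via \cite[Theorem $7$]{Z18}, $T$ preserves weak nullity, and in the atomic order continuous lattice $F$ weak nullity forces un-nullity. The paper simply cites \cite[Proposition $6.2$]{DOT17} for that last step, while you prove it directly by approximating $u$ with finitely many atomic band projections and also supply a valid net version of Zabeti's step, which makes your argument self-contained.
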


\begin{proof}
 Consider $T \in \mathcal{L}(E; F)$ and let a bounded net $(x_{\alpha})_{\alpha}$ in $E$ such that $x_{\alpha} \xrightarrow{uaw} 0$. Since $E^*$ has norm order continuous, by \cite[Theorem $7$]{Z18},
\begin{equation*}
x_{\alpha} \xrightarrow{w} 0
\end{equation*}
and as $T\in \mathcal{L}(E; F)$ and $F$ is a atomic order continuous Banach lattice, by \cite[Proposition $6.2$]{DOT17} we have
\begin{equation*}
T(x_{\alpha}) \xrightarrow{un} 0
\end{equation*}
as $j \to \infty$. So, $T \in \mathcal{L}_{uDP}(E; F)$.
\end{proof}

\begin{definition}
An operator $T\colon E \to F$ between two Riesz spaces is called a lattice homomorphism if
\begin{equation*}
T(x \vee y)=T(x)\vee T(y)
\end{equation*}
for all $x,y\in E$.
\end{definition}

\begin{remark}
\begin{enumerate}
\item[$(a)$] It is immediate that every lattice homomorphism $T\colon E \to F$ is a positive and disjointness-preserving operator (see \cite[Theorem 2.14]{AB85}). Moreover,
\begin{equation*}
|T(x)|=T(|x|)
\end{equation*}
for every $x\in E$.
\item[$(b)$] A bijective lattice homomorphism is called a lattice isomorphism.
\end{enumerate}
\end{remark}

Let $E$ and $F$ be Banach lattices and $T \colon E \to F$. We say that T is (sequentially) un-compact if $T(B_E)$ is relatively (sequentially) un-compact in $F$. Equivalently, for every bounded net $(x_{\alpha})_{\alpha}$ (respectively, every bounded sequence $(x_j)_{j=1}^{\infty}$) its image has a subnet (respectively, subsequence), which is un-convergent, for more details see \cite{KMT17}. 

%\begin{remark}
%Note that, considering that $E^*$ has norm order continuous, then every net $(x_{\alpha})_{\alpha}$ $($sequence $(x_j)_{j=1}^{\infty}$$)$ uaw-null is, in particular, bounded, according to \cite[Theorem $7$]{Z18}. So, under this condition, we can see that every un-compact operator $T\colon E \rightarrow F$ is an unbounded Dunford-Pettis operator.
%\end{remark}

%For simplicity, we denote by $\mathcal{L}_{u\mathcal{K}}$ the class of all un-compact operators between Banach lattices. 

\begin{theorem}
Let $E$ and $F$ ba Banach lattices and $T \in \mathcal{L}(E; F)$.
\begin{enumerate}
\item[$(a)$] If $T$ is a lattice isomorphism sequentially un-compact then  $T$ is a $\sigma$-unbounded Dunford-Pettis operator.
\item[$(b)$] If $E$ is a reflexive and atomic Banach lattice and $T$  is a $\sigma$-unbounded Dunford-Pettis operator then $T$ is a   sequentially un-compact operator.
\end{enumerate}
\end{theorem}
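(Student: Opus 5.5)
For part $(a)$, I will exploit the fact that a lattice isomorphism lets us move the unbounded modulus between $E$ and $F$. Let $(x_j)_{j=1}^{\infty}$ be a bounded sequence in $E$ with $x_j \xrightarrow{uaw} 0$, and fix $u \in F^+$. Since $T$ is a bijective lattice homomorphism, $T^{-1}$ is also a lattice homomorphism, hence positive, and $T^{-1}$ is bounded by the open mapping theorem. Put $v = T^{-1}(u) \in E^+$. By the remark after the definition of lattice homomorphism,
\begin{equation*}
|T(x_j)| \wedge u = T(|x_j|) \wedge T(v) = T(|x_j| \wedge v),
\end{equation*}
and $|x_j|\wedge v \xrightarrow{w} 0$ by uaw-convergence. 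Hence $|T(x_j)|\wedge u \xrightarrow{w} 0$ in $F$, that is, $T(x_j) \xrightarrow{uaw} 0$ in $F$.

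The remaining task is to upgrade this to un-convergence, and here I would use sequential un-compactness. Suppose $T(x_j) \not\xrightarrow{un} 0$. Then there are $u \in F^+$, $\varepsilon>0$ and a subsequence with $\||T(x_{j_k})|\wedge u\| \ge \varepsilon$. Since $T$ is sequentially un-compact and $(x_{j_k})$ is bounded, a further subsequence satisfies $T(x_{j_{k_l}}) \xrightarrow{un} y$ for some $y\in F$. Un-convergence implies uaw-convergence, because norm convergence of $|z_l - y|\wedge u$ gives its weak convergence. Uaw-limits are unique, since the uaw topology is Hausdorff (if $|y|\wedge u$ is weakly $0$ for all $u$, then $y=0$). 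Together with the previous paragraph this forces $y=0$, which contradicts the choice of $\varepsilon$. So $T$ is $\sigma$-uDP.

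For part $(b)$, take a bounded sequence $(x_j)$ in $E$. Since $E$ is reflexive, there is a weakly convergent subsequence $x_{j_k} \xrightarrow{w} x$. The key step is to show $x_{j_k} - x \xrightarrow{uaw} 0$. Reflexive Banach lattices have order continuous norm, so $E$ is atomic with order continuous norm, and \cite[Proposition $6.2$]{DOT17} gives that weakly null sequences are un-null. Un-null implies uaw-null, so $x_{j_k}-x \xrightarrow{uaw} 0$, and the sequence is bounded. Because $T$ is $\sigma$-uDP, $T(x_{j_k}) - T(x) \xrightarrow{un} 0$, i.e.\ $T(x_{j_k}) \xrightarrow{un} T(x)$. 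Thus every bounded sequence has an image with an un-convergent subsequence, so $T$ is sequentially un-compact.

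The main obstacle I expect is in $(a)$: justifying uniqueness of uaw-limits, so that the un-limit of the subsequence must be $0$. In $(b)$ the delicate point is checking the hypotheses of the DOT17 proposition, namely atomicity together with order continuity of $E$, which follows from reflexivity.
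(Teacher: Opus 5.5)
Your proposal is correct and follows the paper's proof. In (a) you use the same identity $|T(x_j)|\wedge u=T(|x_j|\wedge T^{-1}u)$, then sequential un-compactness, uniqueness of uaw-limits and a subsequence-of-a-subsequence argument; in (b) you use Eberlein--\v{S}mulian followed by the $\sigma$-uDP property. The only variation is that in (b) you pass from weak to uaw convergence via Proposition 6.2 of [DOT17] (weakly null implies un-null in atomic order continuous lattices) rather than the paper's Proposition 2.5.23 of [MN91] (weakly sequentially continuous lattice operations). You also make explicit two points the paper leaves implicit: that reflexivity gives order continuity, and that $\sigma$-uDP must be applied to $x_{j_k}-x$.
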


\begin{proof}
$(a)$ Let $T\colon E\to F$ be a lattice isomorphism that is sequentially un-compact, and let $(x_j)$ be a bounded sequence in $E$ such that $x_j\xrightarrow{uaw}0$.

By the sequential un-compactness of $T$, there exists a subsequence $(T(x_{j_k}))$ such that
\begin{equation*}
T(x_{j_k})\xrightarrow{un}y
\end{equation*}
for some $y\in F$. Since un-convergence implies uaw-convergence, we also have
\begin{equation*}
T(x_{j_k})\xrightarrow{uaw}y.
\end{equation*}

On the other hand, since $T$ is a lattice isomorphism, for every $v\in F^+$ there exists $u\in E^+$ such that $v=T(u)$. Hence,
\begin{equation*}
\begin{aligned}
|T(x_{j_k})|\wedge v
&=T(|x_{j_k}|)\wedge T(u)\
&=T(|x_{j_k}|\wedge u)
\xrightarrow{w}0,
\end{aligned}
\end{equation*}
where the last convergence follows from
$x_{j_k}\xrightarrow{uaw}0$ and the continuity of $T$.
Thus, $T(x_{j_k})\xrightarrow{uaw}0$.
By the uniqueness of the uaw-limit, $y=0$.

Consequently, every subsequence of $(T(x_j))$ has a further subsequence that converges un to zero. It follows that $T(x_j)\xrightarrow{un}0$. Indeed, otherwise, there would exist $u\in F^+$, $\varepsilon>0$, and a subsequence $(T(x_{j_k}))$ such that
\begin{equation*}
\||T(x_{j_k})|\wedge u\|\geq\varepsilon
\end{equation*}
for every $k$. By the sequential un-compactness of $T$, this subsequence would have a further subsequence un-convergent  to some $z\in F$. The preceding argument yields $z=0$, contradicting the displayed inequality.
Therefore, $T$ is a $\sigma$-unbounded Dunford--Pettis operator.

$(b)$ Suppose, for contradiction, that $T$ is not an sequentially un-compact operator, then there exists a bounded sequence $(x_j)_{j=1}^{\infty}$ in $E$ such that $(T(x_j))_{j=1}^{\infty}$ has no un-convergent subsequence. Since $(x_j)_{j=1}^{\infty}$ is bounded and $E$ is reflexive, by the Eberlein–Šmulian Theorem, there exists a subsequence $(x_{j_k})_{k=1}^{\infty}$ that is weakly convergent. As $E$ is an atomic order continuous Banach lattice, by \cite[Proposition $2.5.23$]{MN91}, the lattice operations are weakly sequentially continuous, and thus $(x_{j_k})_{k=1}^{\infty}$ is uaw-convergent. Since $T$ is a $\sigma$-unbounded Dunford-Pettis operator then $(T(x_{j_k}))_{k=1}^{\infty}$ is un-convergent. But this does not occur.
\end{proof}

%\begin{theorem}
%Let $E$ be a atomic order continuous Banach lattice and $F$ be a Banach lattice and $T \in \mathcal{L}(E; F)$. Then, $T$ is $\sigma$-un-compact if, and only if, T is $\sigma$-unbounded Dunford-Pettis
%\end{theorem}

%\begin{proof}
%"$\Rightarrow$" It has already been addressed in the previous observation.

%"$\Leftarrow$" Suppose, for contradiction, that $T$ is not an unbounded compact operator, then there exists a bounded sequence $(x_j)_{j=1}^{\infty}$ in $E$ such that $(T(x_j))_{j=1}^{\infty}$ has no un-convergent subsequence. Since $(x_j)_{j=1}^{\infty}$ is bounded, by the Eberlein–Šmulian Theorem, there exists a subsequence $(x_{j_k})_{k=1}^{\infty}$ that is weakly convergent. As $E$ is an atomic order continuous Banach lattice, by \cite[Proposition $2.5.23$]{MN91}, the lattice operations are weakly sequentially continuous, and thus $(x_{j_k})_{k=1}^{\infty}$ is uaw-convergent. Since $T$ is a unbounded Dunford-Pettis then $(T(x_{j_k}))_{k=1}^{\infty}$ is un-convergente. But this does not occur.
%\end{proof}

In \cite[Proposition $3.1$]{OCCW19}, it was shown that if $T, S\colon E \to F$ with $T$ an unbounded Dunford-Pettis operator and $0 \le S \le T$, then $S$ is also an unbounded Dunford-Pettis operator. Consider $E, F$ and $G$ be Banach lattices and
\begin{equation*}
S_1, T_1 \colon E \to F \text{ and } S_2, T_2 \colon F \to G
\end{equation*}
positive operators with $0 \le S_i \le T_i$ ($i=1, 2$). If $T_1$ and $T_2$ are unbounded Dunford-Pettis operators, then $S_2 \circ S_1$ is an unbounded Dunford-Pettis operator. Indeed, since $T_1$ and $T_2$ are unbounded Dunford-Pettis operators then $S_1$ and $S_2$  are also unbounded Dunford-Pettis and for every $x_{\alpha} \xrightarrow{uaw} 0$ we have
\begin{equation*}
S_1(x_{\alpha}) \xrightarrow{un} 0
\end{equation*}
and, consequently
\begin{equation*}
S_1(x_{\alpha}) \xrightarrow{uaw} 0.
\end{equation*}
Thus,
\begin{equation*}
S_2(S_1(x_{\alpha})) \xrightarrow{un} 0.
\end{equation*}
Hence, $S_2\circ S_1 \in \mathcal{L}_{uDP}(E; G)$.

Following this same line of thought, we can see that if $S \in \mathcal{L}_{uDP}(E; F)$ and $T \in \mathcal{L}_{un}(F; G)$ then $T \circ S \in \mathcal{L}_{uDP}(E; G)$. Indeed, let $(x_{\alpha})_{\alpha}$ in $E$ with $x_{\alpha} \xrightarrow{uaw} 0$, then $S(x_{\alpha}) \xrightarrow{un} 0$. Thus,
\begin{equation*}
T \circ S(x_{\alpha}) = T(S(x_{\alpha})) \xrightarrow{un} 0.
\end{equation*}
Hence, $T \circ S \in \mathcal{L}_{uDP}(E; G)$.

It is also important to note that, in general, there is no hope for the class $\mathcal{L}_{uDP}$ to be an ideal of linear operators (\cite{P67}), since unbounded absolutely weakly convergent sequences are not closed under linear functionals. Indeed, consider
\begin{equation*}
E = \ell_1 \text{ and } x_j = e_j.
\end{equation*}
 It is simple to see that $e_j \xrightarrow{uaw} 0$, however, for $f = (1, 1, 1,\dots) \in \ell_{\infty} = \ell_1^*$ we have
\begin{equation*}
f(e_j) = 1 \not\rightarrow 0.
\end{equation*}
This highlight a fundamental difference between unbounded absolutely weak convergence and weak convergence.

\begin{proposition}
Let $E$ and $F$ be Banach lattices and $T \in \mathcal{L}(E; F)$. If $T \in \mathcal{L}_{uDP}(E; F)$ and preserves disjointness then $|T|$ exists and belongs to $\mathcal{L}_{uDP}(E; F)$.
\end{proposition}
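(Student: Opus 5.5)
The plan is to use the structure theory of disjointness preserving operators to get a pointwise formula for $|T|$. Then the uDP property passes from $T$ to $|T|$ at the level of moduli of images.

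\emph{Existence of $|T|$.} Since $T$ is a bounded disjointness preserving operator between Banach lattices, it is order bounded. By the classical Meyer theorem (see \cite{AB85}), the modulus $|T|$ exists, is a lattice homomorphism, and satisfies
\begin{equation*}
|T|(|x|)=|T(|x|)| \quad\text{and}\quad |T(x)| = |T|(|x|) = \big|T(|x|)\big|
\end{equation*}
for every $x\in E$. Equivalently, $|T(x)|=|T(|x|)|$ and $|T|(x)=|T(x^+)|-|T(x^-)|$.

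\emph{uDP property of $|T|$.} Let $(x_\alpha)_\alpha$ be a bounded net in $E$ with $x_\alpha \xrightarrow{uaw} 0$. I first check that $(|x_\alpha|)_\alpha$ is also bounded and uaw-null. This holds because
\begin{equation*}
\big||x_\alpha|\big|\wedge u=|x_\alpha|\wedge u \xrightarrow{w}0
\end{equation*}
for every $u\in E^+$. Since $|T|$ is a lattice homomorphism, $\big||T|(x_\alpha)\big|=|T|(|x_\alpha|)=|T(|x_\alpha|)|$. Hence for each $v\in F^+$,
\begin{equation*}
\big\|\,\big||T|(x_\alpha)\big|\wedge v\big\| = \big\|\,|T(|x_\alpha|)|\wedge v\big\|.
\end{equation*}
Applying the hypothesis $T\in\mathcal{L}_{uDP}(E;F)$ to the bounded uaw-null net $(|x_\alpha|)_\alpha$ gives $T(|x_\alpha|)\xrightarrow{un}0$. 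So the right-hand side tends to $0$, and $|T|(x_\alpha)\xrightarrow{un}0$. Continuity of $|T|$ is automatic, since it is a positive operator between Banach lattices. Therefore $|T|\in\mathcal{L}_{uDP}(E;F)$.

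\emph{Main obstacle.} The key step is the first one: existence of $|T|$ and the identity $|T|(|x|)=|T(|x|)|$. This requires order boundedness of $T$ together with Meyer's theorem, which yields $|T|(|x|)=|T|x||$ for disjointness preserving order bounded operators. Order boundedness holds because a norm-bounded disjointness preserving operator on a Banach lattice is order bounded (Abramovich's result; see \cite{AB85}). I would cite these facts rather than reprove them. Once the formula is available, the rest is the routine reduction above.
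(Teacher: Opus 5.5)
Your proof is correct and follows essentially the paper's route: both invoke Meyer's theorem (\cite[Theorem 2.40]{AB85}) to get $|T(x)|=|T(|x|)|=|T|(|x|)$ and then control $\bigl||T|(x_\alpha)\bigr|\wedge u$ by $|T(x_\alpha)|\wedge u$. The paper applies the uDP hypothesis directly to $(x_\alpha)$, whereas your detour through the net $(|x_\alpha|)$ is harmless since $|T(|x_\alpha|)|=|T(x_\alpha)|$; you are also right to flag that Meyer's theorem needs order boundedness of $T$, which Abramovich's theorem supplies for norm-bounded disjointness preserving operators and which the paper leaves implicit.
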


\begin{proof}
By \cite[Theorem $2.40$]{AB85}, $|T|$ exists and for all $x \in E$
\begin{equation*}
|T(x)| = |T(|x|)| = |T|(|x|).
\end{equation*} 
Let $(x_{\alpha})_{\alpha} \subset E$ with $x_{\alpha} \xrightarrow{uaw} 0$,  for all $u \in F^+$ we have
\begin{equation*}
||T|(x_{\alpha})|\wedge u \le ||T|(|x_{\alpha}|)|\wedge u  = |T(x_{\alpha})|\wedge u.
\end{equation*}
Since, $T \in \mathcal{L}_{uDP}(E; F)$, the inequality above shows us that $|T| \in \mathcal{L}_{uDP}(E; F)$.
\end{proof}

\section{Unbounded weak Dunford-Pettis Operators on Banach Lattices}

In this section, we introduce the concept of unbounded weak Dunford-Pettis operators and explore their properties, in particular, their relationship with the classes of unbounded Dunford-Pettis and un-compact operators.

A natural first attempt at defining unbounded weak Dunford-Pettis operators is the following.

\begin{definition} Let $E$ and $F$ be Banach lattices and $T \in \mathcal{L}(E; F)$. We say that $T$ is an unbounded weak Dunford-Pettis whenever for every bounded nets $(x_{\alpha})_{\alpha}$ in $E$ and $(f_{\alpha})_{\alpha}$ in $(F^*)^+$,  with $x_{\alpha} \xrightarrow{uaw} 0$ and $f_{\alpha} \xrightarrow{w} 0$ then
\begin{equation*}
f_{\alpha}\left(|T(x_{\alpha})| \wedge u \right) \to 0
\end{equation*}
for every $u \in F^+$. In this case, we write $T \in \mathcal{L}_{uwDP}(E; F)$.
%\item[$(b)$] We say that $T$ is a $\sigma$-unbounded weak Dunford-Pettis whenever for every bounded sequences $(x_j)_{j=1}^{\infty}$ in $E$ and $(f_j)_{j=1}^{\infty}$ in $(F^*)^+$,  with $x_j \xrightarrow{uaw} 0$ and $f_j \xrightarrow{w} 0$ then
%\begin{equation*}
%f_j\left(|T(x_j)| \wedge u \right) \to 0
%\end{equation*}
%as $j \to \infty$, for every $u \in E^+$.
%\end{enumerate}
\end{definition}

However, such an approach is not attractive because, for all $T \in \mathcal{L}(E; F)$, bounded nets $(x_{\alpha})_{\alpha}$ in $E$ and $(f_{\alpha})_{\alpha}$ in $(F^*)^+$ with $x_{\alpha} \xrightarrow{uaw} 0$ in $E$ and $f_j \xrightarrow{w} 0$ in $F^*$,  and $u \in F^+$, we have
\begin{equation*}
0 \le |T(x_{\alpha})| \wedge u \le u \Rightarrow 0 \le f_{\alpha}( |T(x_{\alpha})| \wedge u) \le f_{\alpha}(u).
\end{equation*}
Since, $f_{\alpha}(u) \to 0$ then $f_{\alpha}( |T(x_{\alpha})| \wedge u) \to 0$ for all $T \in \mathcal{L}(E; F)$. So, 
\begin{equation*}
\mathcal{L}_{uwDP}(E; F) = \mathcal{L}(E; F).
\end{equation*}

%The class of all unbounded weak Dunford-Pettis between Banach lattices is denoted by  $\mathcal{L}_{uwDP}(E; F)$ and the class of all unbounded weak Dunford-Pettis by $\mathcal{L}_{\sigma-uwDP}(E; F)$. When, $E = F$ we just writh  $\mathcal{L}_{uwDP}(E)$ and $\mathcal{L}_{\sigma-uwDP}(E)$, respectively.

For this reason, we present the following concept for unbounded weak Dunford-Pettis continuous linear operators between Banach lattices.

\begin{definition}
 Let $E$ and $F$ be Banach lattices and $T \in \mathcal{L}(E; F)$.
\begin{enumerate}
\item[$(a)$] $T$ is unbounded weak Dunford-Pettis if,  for every bounded nets $(x_{\alpha})_{\alpha}$ in $E$ and $(f_{\alpha})_{\alpha}$ in $(F^*)^+$,  with $x_{\alpha} \xrightarrow{uaw} x \in E$ and $f_{\alpha} \xrightarrow{w} f \in F^*$ then
\begin{equation*}
f_{\alpha}\left(|T(x_{\alpha})| \wedge u \right) \to f\left(|T(x)|\wedge u \right)
\end{equation*}
for every $u \in E^+$.
\item[$(b)$] $T$ is $\sigma$-unbounded weak Dunford-Pettis if, for every bounded sequences $(x_j)_{j=1}^{\infty}$ in $E$ and $(f_j)_{j=1}^{\infty}$ in $(F^*)^+$,  with $x_j \xrightarrow{uaw} x \in E$ and $f_j \xrightarrow{w} f \in F^*$ then
\begin{equation*}
f_j\left(|T(x_j)| \wedge u \right) \to f\left(|T(x)|\wedge u \right)
\end{equation*}
as $j \to \infty$, for every $u \in E^+$.
\end{enumerate}
\end{definition}
The class of all unbounded weak Dunford-Pettis between Banach lattices is denoted by  $\mathcal{L}_{uwDP}(E; F)$ and the class of all unbounded weak Dunford-Pettis by $\mathcal{L}_{\sigma-uwDP}(E; F)$. When, $E = F$ we just write  $\mathcal{L}_{uwDP}(E)$ and $\mathcal{L}_{\sigma-uwDP}(E)$, respectively.

\begin{example}
The canonical inclusion $J\colon c_0 \to \ell_\infty$, given by $J(x) = x$, is a $\sigma$-unbounded weak Dunford-Pettis operator. Indeed, let $(x_j)_{j=1}^{\infty}$ in $c_0$ and $(f_j)_{j=1}^{\infty}$ in $(\ell_{\infty}^*)^+$ such that $x_j \xrightarrow{uaw} x \in E$ and $f_j \xrightarrow{w} f \in \ell_{\infty}^*$. Since, $J$ is a canonical inclusion then
\begin{equation*}
f_j(|J(x_j)|\wedge u) = f_j(|x_j|\wedge u) \text{ and } f(|J(x)|\wedge u) = f(|x|\wedge u)
\end{equation*}
for any $u = (u_k)_{k=1}^{\infty} \in \ell_{\infty}^+$. Thus,
\begin{equation*}
0 \le |x_j(k)|\wedge u_k \le |x_j(k)|
\end{equation*}
and so, $ |x_j(k)|\wedge u_k \to 0$, as $k\to \infty$. Then, $|x_j|\wedge u \in c_0$. It follows the same ideas, $|x|\wedge u \in c_0$. Since,
\begin{equation*}
||x_j|\wedge u - |x|\wedge u| \le |x_j - x| \wedge u,
\end{equation*}
and given that, $x_j \xrightarrow{uaw} x$, then, $|x_j - x|\wedge u \xrightarrow{w} 0$. So, for every $g \in \ell_{\infty}^*$
\begin{equation*}
|g(|x_j|\wedge u - |x|\wedge u)| \le |g|(||x_j|\wedge u - |x|\wedge u|) \le |g|(|x_j - x| \wedge u).
\end{equation*}
So,
\begin{equation*}
|x_j|\wedge u \xrightarrow{w} |x|\wedge u.
\end{equation*}
Now, consider $g_j = f_j|_{c_0}$ and $g = f|_{c_0}$. We can easily see that $g_j, g \in c_0^* = \ell_1$, $g_j \ge 0$ and $g_j \xrightarrow{w} g$. By Schur property in $\ell_1$, we have
\begin{equation*}
\|g_j - g\|_1 \to 0.
\end{equation*} 
Thus,
\begin{align*}
&|f_j(|x_j|\wedge u) - f(|x|\wedge u)|\\ 
&= |g_j(|x_j|\wedge u) - g(|x|\wedge u)|\\
&\le |(g_j - g)(|x_j|\wedge u)| + |g(|x_j|\wedge u - |x|\wedge u)|\\
&\le \|g_j - g\| \||x_j|\wedge u\|_{\infty} +  |g(|x_j|\wedge u - |x|\wedge u)|.
\end{align*}
So, 
\begin{equation*}
f_j(|x_j|\wedge u)  \to f(|x|\wedge u).
\end{equation*}
Hence, $J \in \mathcal{L}_{\sigma-uwDP}(c_0; \ell_\infty)$.
\end{example}

We can immediately see that, for any Banach lattices $E$ and $F$ 
\begin{equation*}
\mathcal{L}_{uDP}(E; F) \subset \mathcal{L}_{uwDP}(E; F) \subset \mathcal{L}(E; F).
\end{equation*}
In general,
\begin{equation*}
\mathcal{L}_{uwDP}(E; F) \varsubsetneq \mathcal{L}(E; F).
\end{equation*}
Indeed, consider $E = \ell_1$ and define $T\colon \ell_1 \to \ell_1$ given by
\begin{equation*}
T(x) = \varphi(x) e_1
\end{equation*}
where $e_1 = (1, 0,\dots)$ and
\begin{equation*}
\varphi\left((x_k)_{k=1}^{\infty}\right) = \sum_{k=1}^{\infty}x_k.
\end{equation*}
It is immediate that $T\in \mathcal{L}(E; F)$. On the other hand, let $(x_j)_{j=1}^{\infty}$ in $\ell_1$ with $x_j = e_j$, where $e_j = (0,\dots, 0,\underset{j-th}{1}, 0, \dots)$. It is easy see that $x_j \xrightarrow{uaw} 0$. Now, consider $f_j = f = (1, 1,\dots) \in \ell_{\infty} = (\ell_1)^{*}$. Since
\begin{equation*}
\varphi(x_j) = \varphi(e_j) = \sum_{k=1}^{\infty} e_j(k) = 1
\end{equation*}
then, $T(x_j) = T(e_j) = e_1$. For $e_1 \in c_0^+$
\begin{equation*}
f_j\left(|T(x_j)|\wedge e_1 \right) = f_j(e_1) = 1,
\end{equation*}
and thus, 
\begin{equation*}
f_j\left(|T(x_j)|\wedge e_1 \right) \not\rightarrow 0 = f(|T(0)|\wedge e_1).
\end{equation*}

%Vejamos que $x_j \xrightarrow{uaw} 0$. Seja $u = (u_k)_k \in c_0^+$, como para $j$ suficientemente grande temos
%\begin{equation*}
%j^2 \ge \|u\|_{\infty} \ge u_j
%\end{equation*}
%pois $u_j \to 0$, then
%\begin{equation*}
%|x_j|\wedge u = (j^2g_j)\wedge u = \min{j^2, u_j}e_j = u_je_j \xrightarrow{w} 0.
%\end{equation*}
%Vejamos que $T(x) = \varphi(x) e_1 \in \mathcal{L}_{uwDP}(E; F)$. Seja $x_j \xrightarrow{uaw} 0$ in $E$ e $f_j \xrightarrow{w} 0$ in $(F^*)^+$. Assim, para todo $u \in c_0^+$
%\begin{align*}
%f_j\left(|T(x_j)| \wedge u \right) &= f_j\left(|\varphi(x_j)| e_1\wedge u \right)\\
%&= f_j(\min\{|\varphi(x_j)|, u_1\}e_1)\\
%&= \min\{|\varphi(x_j)|, u_1\} f_j(e_1).
%\end{align*} 
%Como,
%\begin{equation*}
%0 \le \min\{|\varphi(x_j)|, u_1\} \le u_1 \Rightarrow 0 \le \min\{|\varphi(x_j)|, u_1\} f_j(e_1) \le u_1 f_j(e_1)
%\end{equation*}
%e $ f_j(e_1) \to 0$ then
%\begin{equation*}
% \min\{|\varphi(x_j)|, u_1\} f_j(e_1) \to 0.
%\end{equation*}

We also can see that
\begin{equation*}
\mathcal{L}_{\sigma-uDP}(E; F) \varsubsetneq \mathcal{L}_{\sigma-uwDP}(E; F).
\end{equation*}
Indeed, Consider the canonical inclusion $J\colon c_0 \to \ell_\infty$ given by $J(x) = x$. We have already seen that $J \in \mathcal{L}_{\sigma-uwDP}(c_0; \ell_\infty)$, we need to see then that $T \notin \mathcal{L}_{uDP}(c_0; \ell_{\infty})$. Let $(e_j)_{j=1}^{\infty}$ in $c_0$, it is simple to see that $e_j \xrightarrow{uaw} 0$ in $c_0$. Note that, $J(e_j) = e_j$ and take $u = (1, 1,\dots) \in \ell_{\infty}^+$, thus
\begin{equation*}
\||J(e_j)|\wedge u\|_{\infty} = \|e_j \wedge u\|_{\infty} = \|e_j\|_{\infty} = 1
\end{equation*}
for any $j \in \mathbb{N}$. So,
\begin{equation*}
\||J(e_j)|\wedge u\|_{\infty} \not\to 0.
\end{equation*}
Hence, $T \notin \mathcal{L}_{\sigma-uDP}(c_0; \ell_{\infty})$.

%$(x_j)_{j=1}^{\infty}$ in $c_0$ such that $x_j = j^3e_j$. It is simple see that $x_j \xrightarrow{uaw} 0$. 
%%pois para qualquer $u \in c_0^+$ e $f \in (c_0)^{*}$ temos
%%\begin{equation*}
%%|f(|x_j|\wedge u)| = |f(j^3e_j\wedge u)| = |f(\min\{j^3, u_j\}e_j)| = (\min\{j^3, u_j\} |f(e_j)| \le u_j |f(e_j)| \to 0.
%%\end{equation*}
%Whenever,
%\begin{equation*}
%T(x_j) = \varphi(x) e_1 = je_1
%\end{equation*}
%and so,
%\begin{equation*}
%\||T(x_j)| \wedge e_1\|_{\infty} = \|je_1 \wedge e_1\|_{\infty} = \|e_1\|_{\infty} = 1 \not\rightarrow 0.
%\end{equation*}

\begin{theorem}
 Let $E$ and $F$ be Banach lattices and $T \in \mathcal{L}(E; F)$. If $F$ is a reflexive Banach lattice and $F^*$ has weakly sequentially continuous  lattices operations then every $T$  $\sigma$-unbounded weak Dunford-Pettis operator is also a $\sigma$-unbounded Dunford-Pettis operator.
\end{theorem}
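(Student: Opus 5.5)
We argue by contradiction, using reflexivity of $F$ to extract weakly convergent positive functionals and then applying the $\sigma$-uwDP property with limit $x=0$. (In the definition of $\sigma$-uwDP we read $u\in E^+$ as $u\in F^+$, which is the only reading that makes sense.)

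\textbf{Setup.} Let $T\in\mathcal{L}_{\sigma-uwDP}(E;F)$ and let $(x_j)_{j=1}^{\infty}$ be a bounded sequence in $E$ with $x_j\xrightarrow{uaw}0$. Suppose $T(x_j)\not\xrightarrow{un}0$. Then there exist $u\in F^+$, $\varepsilon>0$ and a subsequence, which we still denote by $(x_j)$, such that
\begin{equation*}
\||T(x_j)|\wedge u\|\ge\varepsilon \quad\text{for all } j.
\end{equation*}
This subsequence is still bounded and still uaw-null.

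\textbf{Choosing the functionals.} By Hahn--Banach, pick $g_j\in F^*$ with $\|g_j\|=1$ and $g_j(|T(x_j)|\wedge u)=\||T(x_j)|\wedge u\|$. Since $F$ is reflexive, $F^*$ is reflexive. By the Eberlein--\v{S}mulian theorem, passing to a further subsequence we may assume $g_j\xrightarrow{w}g$ for some $g\in F^*$.

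Now set $f_j=|g_j|$ and $f=|g|$. This is the step where the hypothesis on $F^*$ enters. Since $F^*$ has weakly sequentially continuous lattice operations, $g_j-g\xrightarrow{w}0$ implies $|g_j-g|\xrightarrow{w}0$. I expect the main subtlety to be upgrading this to $|g_j|\xrightarrow{w}|g|$. The standard formulation of weak sequential continuity of lattice operations (as in \cite[Proposition $2.5.23$]{MN91}) gives this directly, i.e. $g_j\xrightarrow{w}g$ implies $|g_j|\xrightarrow{w}|g|$, and I will invoke it in that form. Hence $(f_j)$ is a bounded sequence in $(F^*)^+$ with $f_j\xrightarrow{w}f$.

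Moreover, since $|T(x_j)|\wedge u\ge 0$ and $f_j=|g_j|\ge g_j$,
\begin{equation*}
f_j(|T(x_j)|\wedge u)\ge g_j(|T(x_j)|\wedge u)=\||T(x_j)|\wedge u\|\ge\varepsilon .
\end{equation*}

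\textbf{Contradiction.} Apply the $\sigma$-unbounded weak Dunford--Pettis property to the bounded sequences $(x_j)$ and $(f_j)$, where $x_j\xrightarrow{uaw}0$ and $f_j\xrightarrow{w}f$. It gives
\begin{equation*}
f_j(|T(x_j)|\wedge u)\to f(|T(0)|\wedge u)=f(0)=0,
\end{equation*}
which contradicts the lower bound $\varepsilon$ above. Therefore $T(x_j)\xrightarrow{un}0$, and $T\in\mathcal{L}_{\sigma-uDP}(E;F)$.
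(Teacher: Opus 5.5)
Your proof is correct and follows the same route as the paper's: norming functionals from Hahn--Banach, a weakly convergent subsequence via reflexivity of $F^*$, passage to $|g_j|\xrightarrow{w}|g|$ using the weak sequential continuity of the lattice operations in $F^*$, and then the $\sigma$-uwDP property with limit $x=0$ to contradict $\||T(x_j)|\wedge u\|\ge\varepsilon$. The subtlety you flag is harmless. Every element of $F^{**}$ is a difference of positive functionals, and for positive $\Phi\in F^{**}$ the bound $\bigl||g_j|-|g|\bigr|\le|g_j-g|$ gives $|\Phi(|g_j|-|g|)|\le\Phi(|g_j-g|)$, so $|g_j-g|\xrightarrow{w}0$ already yields $|g_j|\xrightarrow{w}|g|$.
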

\begin{proof}
Suppose that $T$ is not a unbounded Dunford-Pettis operator, thus there exists a $(x_j)_{j=1}$ with $x_j \xrightarrow{uaw} 0$ such that $T(x_j) \not\xrightarrow{un} 0$, that is, unless there is a subsequence, we can assume that, there exists $u \in F^+$ and $\epsilon > 0$ such that
\begin{equation*}
\||T(x_j)|\wedge u\| > \epsilon
\end{equation*}
for any $j \in \mathbb{N}$.  By Hahn-Banach Theorem, for any $j \in \mathbb{N}$ there exists $f_j \in F^*$ with $\|f_j\| = 1$ and $f_j(|T(x_j)|\wedge u) = \||T(x_j)|\wedge u\|$. By Banach-Alaoglu Thorem, there exists a subsequence $(f_{j_k})_{k=1}^{\infty}$ such that
\begin{equation*}
f_{j_k} \xrightarrow{w} f
\end{equation*}
as $k \to \infty$. Consider $g_j = |f_j|$, thus $\|g_j\| = 1$ and
\begin{equation*}
g_j(|T(x_j)|\wedge u) = \||T(x_j)|\wedge u\|.
\end{equation*}
%De fato, chamando $v=|T(x_j)|\wedge u$ podemos ver que
%\begin{equation*}
%g_j(v) = |f_j|(v) = sup_{|y| < v}|f_j(y)| \le sup_{|y| < v} \|f_j\| \|y\| = sup_{|y| < v} \|f_j\| \||y|\| = \|f_j\| \|v\| = \|v\|.
%\end{equation*}
%Por outro lado, 
%\begin{equation*}
%g_j(v) = |f_j|(v) \ge |f_j(v)| = \|v\|.
%\end{equation*}
This way, there exists $(g_{j_k})_{k=1}^{\infty}$ a subsequence of $(g_j)_{j=1}^{\infty}$ such that
\begin{equation*}
g_{j_k} \xrightarrow{w} |f|
\end{equation*}
because, $F^*$ has weakly sequentially continuous  lattices operations.
%De fato,  Note que,
%\begin{equation*}
%|g_{j_k} - |f|| = ||f_{j_k}| - |f|| \le |f_{j_k} - f|.
%\end{equation*}
%por uma questão de simplicidade, chamaremos $z_k = g_{j_k} - |f|$ e $h_k = f_{j_k} - f$. Dessa forma, $h_k \xrightarrow{w} 0$ e
%\begin{equation*}
%|z_k| \le |h_k|.
%\end{equation*}
%Seja $\Phi \in E^{**}$, assim
%\begin{equation*}
%|\Phi(z_k)| \le |\Phi|(|z_k|) \le |\Phi|(|h_k|).
%\end{equation*}
%Como, $ |\Phi|(|h_k|) \to 0$ pois $\Phi \in E^{**}$ e  $h_k \xrightarrow{w} 0$ então
%\begin{equation*}
%|\Phi(z_k)| \to 0.
%\end{equation*}
%em outras palavras,
%\begin{equation*}
%|\Phi(g_{j_k} - |f|)| \to 0
%\end{equation*}
%para todo $\Phi \in E^{**}$ e assim
%\begin{equation*}
%g_{j_k} \xrightarrow{w} |f|.
%\end{equation*}
Since, $T$ is $\sigma$-unbounded weak Dunford-Pettis then
\begin{equation*}
g_k(|T(x_{j_k})|\wedge u) \to 0
\end{equation*}
that is
\begin{equation*}
\||T(x_{j_k})|\wedge u\| \to 0.
\end{equation*}
Which does not occur.
\end{proof}

\begin{corollary}
Let $E$ and $F$ be Banach lattices and $T \in \mathcal{L}(E; F)$ with $F$ is a reflexive and $F^*$ has weakly sequentially continuous  lattices operations. If $0 \le S \le T$ and $T \in \mathcal{L}_{\sigma-uwDP}(E; F)$ then $S \in \mathcal{L}_{\sigma-uDP}(E; F)$.
\end{corollary}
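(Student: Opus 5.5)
The plan is to factor the corollary through the preceding theorem. First I show that $S\in\mathcal{L}_{\sigma-uwDP}(E;F)$. Since $F$ is reflexive and $F^*$ has weakly sequentially continuous lattice operations, the preceding theorem then gives $S\in\mathcal{L}_{\sigma-uDP}(E;F)$.

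A cleaner route avoids any domination argument inside the uwDP class. Apply the preceding theorem to $T$ itself, so $T\in\mathcal{L}_{\sigma-uDP}(E;F)$. Then adapt \cite[Proposition $3.1$]{OCCW19}, which says that $0\le S\le T$ with $T$ uDP forces $S$ uDP, to the sequential setting. I would check that its proof only uses the given net or sequence, so it transfers verbatim to sequences.

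If one prefers to verify the sequential domination directly, the key steps are as follows. Take a bounded sequence $x_j\xrightarrow{uaw}0$ in $E$ and $u\in F^+$.

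\begin{itemize}
\item Pass to $|x_j|$. Since $||x_j||\wedge v=|x_j|\wedge v$, we still have $|x_j|\xrightarrow{uaw}0$, and the sequence stays bounded.
\item Use positivity of $S$ and $0\le S\le T$ to get
\begin{equation*}
|S(x_j)|\wedge u\le S(|x_j|)\wedge u\le T(|x_j|)\wedge u=|T(|x_j|)|\wedge u.
\end{equation*}
\item Since $T$ is $\sigma$-uDP, the right-hand side tends to $0$ in norm. The Banach lattice norm is monotone, so $\||S(x_j)|\wedge u\|\to0$.
\end{itemize}

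Hence $S(x_j)\xrightarrow{un}0$ and $S\in\mathcal{L}_{\sigma-uDP}(E;F)$.

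The main obstacle is the implicit positivity of $T$. The hypothesis $0\le S\le T$ gives $T\ge 0$, which is exactly what lets us bound $T(|x_j|)$ and drop absolute values. I also need to check that the preceding theorem genuinely yields the $\sigma$-uDP property for $T$ on bounded sequences. Its proof is written with the subsequence extraction and the Hahn--Banach functionals $f_j$ made positive via $|f_j|$, so I would only need to confirm that the resulting $g_{j_k}\ge0$ are bounded and weakly convergent to $|f|$. With $x_{j_k}\xrightarrow{uaw}0$, the $\sigma$-uwDP definition then gives $g_{j_k}(|T(x_{j_k})|\wedge u)\to |f|(0)=0$, which is precisely what that proof uses.
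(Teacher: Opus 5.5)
Your proposal is correct and is essentially the paper's argument: apply the preceding theorem to $T$ to get $T\in\mathcal{L}_{\sigma-uDP}(E;F)$, then pass to $S$ via the domination result \cite[Proposition $3.1$]{OCCW19}. Your direct check $|S(x_j)|\wedge u\le T(|x_j|)\wedge u$, applied to the bounded uaw-null sequence $(|x_j|)$, is valid and supplies the sequential version of that proposition, which the paper only cites in its net form; the first plan of showing $S\in\mathcal{L}_{\sigma-uwDP}(E;F)$, which you abandon, is unnecessary.
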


%\begin{proof}
%Do Teorema acima $T \in \mathcal{L}_{\sigma-uDP}(E; F)$ e por \cite[Proposition $3.1$]{OCCW19} temos que $S \in \mathcal{L}_{\sigma-uDP}(E; F)$.
%\end{proof}

\begin{definition}
Let $E$ and $F$ be Banach lattices and $T \in \mathcal{L}(E; F)$.
\begin{enumerate}
\item[$(a)$] $T$  is called
unbounded absolute weak continuous operator (uaw-continuous, for short) whenever $T(x_{\alpha}) \xrightarrow{uaw} 0$ in $F$ for any bounded $x_{\alpha}  \xrightarrow{uaw} 0$ in $E$. In this case, we write $T \in \mathcal{L}_{uaw}(E; F)$.
\item[$(b)$] $T$ is called
$\sigma$-unbounded absolute weak continuous operator ($\sigma$-uaw-continuous, for short) whenever $T(x_{j})  \xrightarrow{uaw} 0$ in $F$ for any bounded $x_{j}  \xrightarrow{uaw} 0$ in $E$.  In this case, we write $T \in \mathcal{L}_{\sigma-uaw}(E; F)$.
\end{enumerate}
\end{definition}
%\begin{definition}
%An continuous operator $T$ from a Banach lattice $E$ into a Banach lattice $F$ is called
%unbounded absolute weak continuous operator (uaw-continuous, for short) whenever $T(x_{\alpha}) \xrightarrow{uaw} 0$ in $F$ for any bounded $x_{\alpha}  \xrightarrow{uaw} 0$ in $E$. In this case, we write $T \in \mathcal{L}_{uaw}(E; F)$.
%\end{definition}
%and, in the sequential case
%\begin{definition}
%An continuous operator $T$ from a Banach lattice $E$ into a Banach lattice $F$ is called
%$\sigma$-unbounded absolute weak continuous operator ($\sigma$-uaw-continuous, for short) whenever $T(x_{j})  \xrightarrow{uaw} 0$ in $F$ for any bounded $x_{j}  \xrightarrow{uaw} 0$ in $E$.  In this case, we write $T \in \mathcal{L}_{\sigma-uaw}(E; F)$.
%\end{definition}
For more details about this class, see \cite{Z21}.
\begin{remark}\label{uawinun}\
\begin{enumerate}
\item[$(a)$] It is immediate that for any Banach lattices $E$ and $F$, $T \in \mathcal{L}_{uaw}(E; F)$ if, and only if, $T(x_{\alpha}) \xrightarrow{uaw} T(x)$ in $F$ whenever $x_{\alpha} \xrightarrow{uaw} x$ in $E$.
\item[$(b)$] Let $E$ and $F$ be Banach lattices and $F$ has the Schur property. Then,
\begin{equation*}
\mathcal{L}_{\sigma-uaw}(E; F) \subset \mathcal{L}_{\sigma-un}(E; F).
\end{equation*}
\end{enumerate}

\end{remark}
\begin{definition}
Let $E$ be a Banach lattice. We say that $E$ has the $\sigma$-unbounded Dunford-Pettis property if, for every $(x_j)_{j=1}^{\infty}$ in $E$ and $(f_j)_{j=1}^{\infty}$ in $(E^*)^+$ with $x_j \xrightarrow{uaw} 0$ in $E$ and $f_j \xrightarrow{w} 0$ in $E^*$ we have
\begin{equation*}
|f_j(|x_j|\wedge u)| \to 0
\end{equation*}
for any $u \in E^+$ and $j \to \infty$.
\end{definition}

It is immediate to see that if the Banach lattice $E$ has the Dunford-Pettis property, then it also has the $\sigma$-unbounded Dunford-Pettis property. 

We can also observe that every Banach lattice $E$ which $E^*$ has weakly sequentially  continuous lattice operations has the $\sigma$-unbounded Dunford-Pettis property. Indeed, for any $(x_j)_{j=1}^{\infty}$ in $E$ with $x_j \xrightarrow{uaw} 0$ and $(f_j)_{j=1}^{\infty}$ in $(E^*)^+$ with $f_j \xrightarrow{w} 0$. Note that, for any $u \in E^+$
\begin{equation*}
0 \le |x_j|\wedge u \le u \Rightarrow 0 \le f_j(|x_j|\wedge u) \le f_j(u).
\end{equation*}
Since, $f_j(u) \to 0$, then
\begin{equation*}
f_j(|x_j|\wedge u) \to 0.
\end{equation*}
%In particular, for every $(x_j)_{j=1}^{\infty}$ in $E$ with $x_j \xrightarrow{uaw} 0$ we have
%\begin{equation*}
%f_j(|x_j|\wedge u) \to 0.
%\end{equation*}
Now consider $(x_j)_{j=1}^{\infty}$ in $E$ and and $(f_j)_{j=1}^{\infty}$ in $E^*$ with $x_j \xrightarrow{uaw} 0$  and $f_j \xrightarrow{w} 0$.Since $E^*$ has weakly sequentially  continuous lattice operations then $|f_j| \xrightarrow{w} 0$, then
\begin{equation*}
|f_j(|x_j|\wedge u)| \le |f_j|(|x_j|\wedge u)
\end{equation*}
and hence,
\begin{equation*}
|f_j(|x_j|\wedge u)|  \to 0,
\end{equation*}
as $j \to \infty$. 

It is easy to see that every Banach lattice with the Schur property has weakly sequentially continuous lattice operations. Consequently, every Banach lattice whose dual has the Schur property possesses the $\sigma$-unbounded Dunford-Pettis property. Thus, $c_0$ has the $\sigma$-unbounded Dunford-Pettis property because $\ell_1 = c_0^*$ has the Schur property (see Schur’s Theorem  \cite[Theorem $1.7$]{DJT95}).

%and this implies that $f_j^{+} \xrightarrow{w} 0$ and $f^{-} \xrightarrow{w} 0$.  Which results in the stated sentence.

%\begin{definition}
%Let $E$ be a Banach lattice. 
%\begin{enumerate}
%\item[$(a)$] We say that $E$ has the unbounded Dunford-Pettis property if, for evey $(x_{\alpha})_{\alpha}$ in $E$ and $(f_{\alpha})_{\alpha}$ in $E^*$ with $x_{\alpha} \xrightarrow{uaw} 0$ in $E$ and $f_{\alpha} \xrightarrow{w} 0$ in $E^*$ we have
%\begin{equation*}
%|f_{\alpha}(|x_{\alpha}|\wedge u)| \to 0
%\end{equation*}
%for any $u \in E^+$.
%\item[$(b)$] We say that $E$ has the $\sigma$-unbounded Schur property if, for evey $(x_j)_{j=1}^{\infty}$ in $E$ and $(f_j)_{j=1}^{\infty}$ in $E^*$ with $x_j \xrightarrow{uaw} 0$ in $E$ and $f_j \xrightarrow{w} 0$ in $E^*$ we have
%\begin{equation*}
%|f_j(|x_j|\wedge u)| \to 0
%\end{equation*}
%for any $u \in E^+$ and $j \to \infty$.
%\end{enumerate}
%\end{definition}

Following the same spirit as Proposition \ref{unuDP}, we can obtain the following result.

\begin{proposition}
Let $E$ be a Banach lattice. Then the following statements are equivalent:
\begin{enumerate}
\item[$(a)$] Every uaw-continuous opererator $T \in \mathcal{L}(E; E)$ is a $\sigma$-unbounded weak Dunford-Pettis;
\item[$(b)$] The identity $Id_E \colon E \to E$ is a $\sigma$-unbounded weak Dunford-Pettis operator;
\item[$(c)$] $E$ has the $\sigma$-unbounded Dunford-Pettis property.
\end{enumerate}
\end{proposition}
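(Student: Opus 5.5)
We will prove the cycle $(a)\Rightarrow(b)\Rightarrow(c)\Rightarrow(a)$. The implication $(a)\Rightarrow(b)$ is immediate: $Id_E$ is trivially uaw-continuous, since $x_\alpha\xrightarrow{uaw}0$ gives $Id_E(x_\alpha)=x_\alpha\xrightarrow{uaw}0$. Hence $(a)$ forces $Id_E\in\mathcal{L}_{\sigma-uwDP}(E)$.

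For $(b)\Rightarrow(c)$, take $(x_j)$ in $E$ with $x_j\xrightarrow{uaw}0$ and $(f_j)$ in $(E^*)^+$ with $f_j\xrightarrow{w}0$. As everywhere in this section, we read the sequences as bounded; the $f_j$ are automatically bounded, being weakly convergent. Apply the definition of $\sigma$-unbounded weak Dunford--Pettis operator to $Id_E$ with limits $x=0$ and $f=0$. This gives
\begin{equation*}
f_j(|x_j|\wedge u)\to 0\big(|0|\wedge u\big)=0
\end{equation*}
for every $u\in E^+$. Since $f_j\ge 0$, the quantity $f_j(|x_j|\wedge u)$ is nonnegative, so it equals its absolute value, and this is exactly the $\sigma$-unbounded Dunford--Pettis property.

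For $(c)\Rightarrow(a)$, let $T\in\mathcal{L}_{uaw}(E;E)$. Take bounded $x_j\xrightarrow{uaw}x$ and $0\le f_j\xrightarrow{w}f$, and fix $u\in E^+$. By Remark \ref{uawinun}(a), $T(x_j)\xrightarrow{uaw}T(x)$. Set $y_j=T(x_j)-T(x)$; this sequence is bounded and uaw-null. We split
\begin{equation*}
f_j(|Tx_j|\wedge u)-f(|Tx|\wedge u)=f_j\big(|Tx_j|\wedge u-|Tx|\wedge u\big)+(f_j-f)(|Tx|\wedge u).
\end{equation*}
The second term tends to $0$ because $f_j\xrightarrow{w}f$. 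For the first term, we use the lattice inequality $\big||a|\wedge u-|b|\wedge u\big|\le|a-b|\wedge u$, already used in the Example for $J$, together with the positivity of $f_j$. This gives
\begin{equation*}
\big|f_j\big(|Tx_j|\wedge u-|Tx|\wedge u\big)\big|\le f_j(|y_j|\wedge u).
\end{equation*}

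The main obstacle is that $(f_j)$ converges weakly to $f$, not to $0$, while the $\sigma$-unbounded Dunford--Pettis property is only stated for null sequences. We therefore write
\begin{equation*}
f_j(|y_j|\wedge u)=(f_j-f)(|y_j|\wedge u)+f(|y_j|\wedge u).
\end{equation*}
The term $f(|y_j|\wedge u)$ tends to $0$ because $y_j\xrightarrow{uaw}0$ means $|y_j|\wedge u\xrightarrow{w}0$. The sequence $f_j-f$ is weakly null but need not be positive, which is why the property is phrased with the absolute value $|f_j(\cdot)|$. We would first try to decompose $f_j-f$ into positive weakly null sequences. If this is not available in general, the fallback is to read $(c)$ for not-necessarily-positive weakly null $f_j$, exactly as in the verification given after the definition of the property, and apply it directly to $g_j=f_j-f$ and $y_j$. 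This yields $(f_j-f)(|y_j|\wedge u)\to0$ and completes $(c)\Rightarrow(a)$.
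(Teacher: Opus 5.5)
The paper states this proposition without proof, so your argument has to stand on its own, and it has a genuine gap in $(c)\Rightarrow(a)$. Your steps $(a)\Rightarrow(b)$ and $(b)\Rightarrow(c)$ are fine; boundedness of $(x_j)$ is not even needed in the latter, since you may replace $x_j$ by $|x_j|\wedge u$. Your splitting in $(c)\Rightarrow(a)$ correctly reduces everything to showing $(f_j-f)(|y_j|\wedge u)\to 0$, but that is precisely the step you never prove. Property $(c)$, as defined, only involves functionals in $(E^*)^+$. For those it holds in \emph{every} Banach lattice, because $0\le f_j(|x_j|\wedge u)\le f_j(u)\to 0$, as the paper notes right after the definition. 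So $(c)$ says nothing about the non-positive weakly null sequence $f_j-f$.

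Your first idea, splitting $f_j-f$ into positive weakly null pieces, fails in general. Take $E=C[0,1]$ and let $(r_j)$ be the Rademacher functions. Then $f_j=(1+r_j)\,dt\ge 0$ converges weakly in $M[0,1]$ to $f=dt$. But if $r_j\,dt=p_j-q_j$ with $p_j,q_j\ge 0$, then $1=|r_j\,dt|(\mathbf{1})\le p_j(\mathbf{1})+q_j(\mathbf{1})$, so $p_j$ and $q_j$ cannot both be weakly null. Your fallback replaces $(c)$ by its sign-free version, which is a different hypothesis. Your own proof of $(b)\Rightarrow(c)$ does not produce that version, since it uses $f_j\ge 0$ to drop the absolute value. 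So the cycle does not close under either reading of $(c)$.

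Here is what is really needed. Since $(c)$ is automatic, and $(b)\Rightarrow(a)$ is just $T=Id_E\circ T$ together with $T(x_j)\xrightarrow{uaw}T(x)$, the content of $(c)\Rightarrow(a)$ is that $Id_E$ is $\sigma$-unbounded weak Dunford--Pettis on every Banach lattice. By your reduction, this amounts to: $g_j(z_j)\to 0$ whenever $0\le z_j\le u$, $z_j\xrightarrow{w}0$ in $E$, and $g_j\xrightarrow{w}0$ in $E^*$. This is true, but it requires an actual argument. One route is as follows.

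\begin{enumerate}
\item The operator $S\colon E\to c_0$, $Sx=(g_i(x))_i$, is weakly compact. Its adjoint sends $B_{\ell_1}$ into the closed absolutely convex hull of the weakly null sequence $(g_i)$, which is weakly compact by Krein's theorem.
\item Hence $S$ is order weakly compact. By the standard factorization of such operators, $S=\hat S\circ Q$ with $Q\colon E\to G$ a lattice homomorphism into a Banach lattice $G$ with order continuous norm.
\item The sequence $(Qz_j)$ is positive, dominated by $Qu$, and weakly null, hence uaw-null. It is therefore un-null, because uaw- and un-convergence agree in order continuous Banach lattices (the fact behind Proposition~\ref{unuDP}).
\item Thus $\|Qz_j\|=\|Qz_j\wedge Qu\|\to 0$, and $|g_j(z_j)|\le\|Sz_j\|_\infty\le\|\hat S\|\,\|Qz_j\|\to 0$.
\end{enumerate}

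Applied to $g_j=f_j-f$ and $z_j=|y_j|\wedge u$, this completes your proof. It also yields the sign-free version of $(c)$ in every Banach lattice.
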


%\textcolor{red}{An interesting point is that, for any $E$ and $F$ be Dedekind complete Banach lattices and $T\colon E \to F$ be order bounded (see, \cite[Section $1.3$]{MN91}), it always holds that $$|T| \in \mathcal{L}_{uwDP}(E; F) \iff T^+ \text{ and } T^- \in \mathcal{L}_{uwDP}(E; F).$$ Indeed, let $(x_{\alpha})_{\alpha}$ in $E$ and $(f_{\alpha})_{\alpha}$ in $(F^*)^+$ with $x_j \xrightarrow{uaw} x$ in $E$ and $f_{\alpha} \xrightarrow{w} f$ in $F^*$. It is immediate from $|T| = f^+ + f^-$ that, if $T^+$ and $T ^-$ are unbounded Dunford-Pettis then $|T|$ is unbounded Dunford-Pettis.}

\begin{proposition}\label{uwDPisuaw}
Let $E$ and $F$ be Banach lattices. Then,  every unbounded weak Dunford-Pettis $T\colon E \to F$ is unbounded absolutely weak continuous operator.
\end{proposition}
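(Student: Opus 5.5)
The plan is to verify the definition of uaw-continuity directly, testing the definition of unbounded weak Dunford--Pettis operators against \emph{constant} nets of positive functionals. Let $(x_{\alpha})_{\alpha}$ be a bounded net in $E$ with $x_{\alpha} \xrightarrow{uaw} 0$, and fix $u \in F^+$. We must show $|T(x_{\alpha})|\wedge u \xrightarrow{w} 0$ in $F$, that is, $g(|T(x_{\alpha})|\wedge u) \to 0$ for every $g \in F^*$.

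First I reduce to positive functionals. Since $F^*$ is a Banach lattice, every $g \in F^*$ can be written as $g = g^+ - g^-$ with $g^+, g^- \in (F^*)^+$. It therefore suffices to prove $g(|T(x_{\alpha})|\wedge u) \to 0$ for every $g \in (F^*)^+$.

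For such a $g$, take the constant net $f_{\alpha} := g$, indexed by the same directed set as $(x_{\alpha})_{\alpha}$. This net is bounded, lies in $(F^*)^+$, and trivially satisfies $f_{\alpha} \xrightarrow{w} g$. The net $(x_{\alpha})_{\alpha}$ is bounded with $x_{\alpha} \xrightarrow{uaw} 0$. Since $T \in \mathcal{L}_{uwDP}(E;F)$, the definition with $x = 0$ and $f = g$ gives
\begin{equation*}
g\left(|T(x_{\alpha})|\wedge u\right) \to g\left(|T(0)|\wedge u\right) = g(0) = 0 .
\end{equation*}
Combining the two pieces $g^+$ and $g^-$ then yields $|T(x_{\alpha})|\wedge u \xrightarrow{w} 0$ for every $u \in F^+$, i.e.\ $T(x_{\alpha}) \xrightarrow{uaw} 0$. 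Hence $T \in \mathcal{L}_{uaw}(E;F)$.

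There is no serious obstacle here; the only point needing care is the wording of the definition. The condition ``for every $u \in E^+$'' there must be read as $u \in F^+$, since the infimum is taken in $F$. The definition also only quantifies over positive functionals, which is exactly why the decomposition $g = g^+ - g^-$ is needed. The same argument with sequences shows that $\sigma$-unbounded weak Dunford--Pettis operators are $\sigma$-uaw-continuous.
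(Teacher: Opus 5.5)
Your proof is correct and follows the paper's argument exactly: you test the unbounded weak Dunford--Pettis condition against the constant net $f_{\alpha}=g$ for $g \in (F^*)^+$, and then pass to an arbitrary $g \in F^*$ via $g = g^+ - g^-$. Your explicit tracking of the boundedness hypothesis on $(x_{\alpha})_{\alpha}$, and your reading of $u \in E^+$ as $u \in F^+$, are slightly more careful than the paper's write-up.
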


\begin{proof}
Let $T \in \mathcal{L}_{uwDP}(E; F)$ and consider $(x_{\alpha})_{\alpha}$ in $E$ with $x_{\alpha} \xrightarrow{uaw} 0$. For any $f \in (F^*)^+$, consider $f_{\alpha} = f$, for every $\alpha$. Thus, obviously $f_{\alpha} \xrightarrow{w} f$. Since, $T \in \mathcal{L}_{uwDP}(E; F)$ we have
\begin{equation*}
f(|T(x_{\alpha}|\wedge u) \to 0
\end{equation*}
for every $u \in F^+$. Now, for any $g \in F^*$ we can write $g = g^+-g^-$ with $g^+, g^- \in (F^*)^+$, thus
\begin{equation*}
g(|T(x_{\alpha})|\wedge u) = g^+(|T(x_{\alpha})|\wedge u) - g^-(|T(x_{\alpha})|\wedge u) \to 0
\end{equation*}
for every $u \in F^+$. So, $|T(x_{\alpha})|\wedge u \xrightarrow{w} 0$. Hence, $T \in \mathcal{L}_{uaw}(E; F)$.

\end{proof}

\begin{theorem}
Let $E$ and $F$ be Banach lattices. If $E$ has norm order continuous and $F$ has the Schur property then
\begin{equation*}
\mathcal{L}_{\sigma-uwDP}(E; F) = \mathcal{L}_{\sigma-uaw}(E; F) = \mathcal{L}_{\sigma-un}(E; F).
\end{equation*}
\end{theorem}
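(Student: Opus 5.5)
The plan is to prove the chain of inclusions
\begin{equation*}
\mathcal{L}_{\sigma-uwDP}(E; F) \subset \mathcal{L}_{\sigma-uaw}(E; F) \subset \mathcal{L}_{\sigma-un}(E; F) \subset \mathcal{L}_{\sigma-uwDP}(E; F).
\end{equation*}

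The first inclusion is the sequential version of Proposition \ref{uwDPisuaw}. Its proof uses only constant functionals $f_j = f \in (F^*)^+$ and the splitting $g = g^+ - g^-$, so it carries over verbatim to sequences.

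The second inclusion needs a little more than Remark \ref{uawinun}(b), because the input sequences live in $E$. Take $x_j \xrightarrow{un} 0$ in $E$. Un-convergence always implies uaw-convergence, since $\||x_j|\wedge u\| \to 0$ forces weak convergence to $0$. The remark as stated applies to bounded sequences, so I would either check that the definition of $\sigma$-uaw-continuity is used without the boundedness restriction, or localize via $|x_j|\wedge u$; this is a technical point to watch. Then $T(x_j) \xrightarrow{uaw} 0$ in $F$. For each $u \in F^+$ the sequence $|T(x_j)|\wedge u$ is weakly null, and the Schur property of $F$ makes it norm null. Hence $T(x_j) \xrightarrow{un} 0$, which is what Remark \ref{uawinun}(b) records.

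The third inclusion is the main step. Let $T \in \mathcal{L}_{\sigma-un}(E;F)$, take bounded $x_j \xrightarrow{uaw} x$ in $E$, and take $f_j \xrightarrow{w} f$ with $f_j \ge 0$. Since $E$ has order continuous norm, \cite[Theorem $4$]{Z18} (equivalently Proposition \ref{unuDP}) turns uaw-convergence into un-convergence, so $x_j - x \xrightarrow{un} 0$. Then $T(x_j) \xrightarrow{un} T(x)$. Using the lattice inequality $\bigl||a|\wedge u - |b|\wedge u\bigr| \le |a-b|\wedge u$ we get
\begin{equation*}
\bigl\||T(x_j)|\wedge u - |T(x)|\wedge u\bigr\| \to 0 .
\end{equation*}
Now split, exactly as in the $c_0 \to \ell_\infty$ example:
\begin{equation*}
|f_j(|T(x_j)|\wedge u) - f(|T(x)|\wedge u)| \le \|f_j\|\,\bigl\||T(x_j)|\wedge u - |T(x)|\wedge u\bigr\| + |(f_j - f)(|T(x)|\wedge u)|.
\end{equation*}
The first term vanishes because a weakly convergent sequence is norm bounded (uniform boundedness) and the norm factor tends to $0$. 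The second term vanishes by weak convergence tested against the fixed vector $|T(x)|\wedge u \in F$. This gives $T \in \mathcal{L}_{\sigma-uwDP}(E;F)$.

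The main obstacle is bookkeeping rather than ideas. The boundedness hypotheses have to match across the three definitions, and the quantifier ``$u \in E^+$'' in the definition of $\sigma$-uwDP should be read as $u \in F^+$. The substantive inputs are only two: order continuity of $E$ (uaw $\Rightarrow$ un in $E$) and the Schur property of $F$ (uaw $\Rightarrow$ un in $F$).
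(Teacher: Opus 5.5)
Your outline is the paper's: the same three inclusions, the first by constant functionals, the third by uaw $\Rightarrow$ un in the order continuous $E$ followed by a splitting estimate. Your estimate, via $\bigl||a|\wedge u-|b|\wedge u\bigr|\le|a-b|\wedge u$, is the correct form of the paper's.

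The trouble is the middle inclusion, exactly where you hesitated, and it is not mere bookkeeping. With the definitions as written it is false, and so is the theorem. $\sigma$-un-continuity is demanded along \emph{all} un-null sequences, while $\sigma$-uaw-continuity and the $\sigma$-uwDP condition only test bounded ones.

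Take $E=F=\ell_1$ (order continuous, with the Schur property) and $T(x)=\bigl(\sum_n x_n/n\bigr)e_1$. If $(x_j)$ is bounded and $x_j\xrightarrow{uaw}x$, testing $|x_j-x|\wedge e_n$ against $e_n^*$ gives coordinatewise convergence. Then $\bigl|\sum_n (x_j(n)-x(n))/n\bigr|\le\sum_{n\le N}|x_j(n)-x(n)|+\sup_i\|x_i-x\|_1/N$ shows $\|T(x_j)-T(x)\|\to0$. Hence $T\in\mathcal{L}_{\sigma-uwDP}(\ell_1)\cap\mathcal{L}_{\sigma-uaw}(\ell_1)$.

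But $x_j=je_j$ satisfies $\||x_j|\wedge u\|_1=\min\{j,u_j\}\le u_j\to0$ for every $u\in\ell_1^+$, so $x_j\xrightarrow{un}0$, whereas $T(x_j)=e_1$ for every $j$. Thus $T\notin\mathcal{L}_{\sigma-un}(\ell_1)$. So Remark~\ref{uawinun}(b) is false as stated, and the paper's proof, which just cites it, has the same hole.

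Neither repair you mention can work. Truncating via $|x_j|\wedge u$ is useless because $T$ is merely linear: in the example $|je_j|\wedge u\to0$ in norm while $T(je_j)=e_1$. Dropping boundedness only in the definition of $\sigma$-uaw-continuity moves the failure to your first inclusion. This $T$ is $\sigma$-uwDP, while $je_j\xrightarrow{uaw}0$ and $e_1^*\bigl(|T(je_j)|\wedge e_1\bigr)=1$.

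What is needed is a consistent convention. The natural one is to require $\sigma$-un-continuity only along norm bounded sequences. The paper's earlier claim $\mathcal{L}_{uDP}\subset\mathcal{L}_{un}$ tacitly needs this too, since the same $T$ is uDP but not un-continuous as defined. With that convention your middle step is immediate: bounded and un-null $\Rightarrow$ bounded and uaw-null $\Rightarrow$ $T(x_j)\xrightarrow{uaw}0$ $\Rightarrow$ $T(x_j)\xrightarrow{un}0$ by the Schur property of $F$. Your third step is untouched, since it only applies $\sigma$-un-continuity to the bounded sequence $x_j-x$. Removing boundedness from both the $\sigma$-uaw and the $\sigma$-uwDP definitions would also work.

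With the definitions as they stand, your argument proves only $\mathcal{L}_{\sigma-un}(E;F)\subset\mathcal{L}_{\sigma-uwDP}(E;F)\subset\mathcal{L}_{\sigma-uaw}(E;F)$. The example shows the first inclusion can be strict.
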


\begin{proof}
It follows from Remark \eqref{uawinun} and Proposition \eqref{uwDPisuaw}  that
\begin{equation*}
\mathcal{L}_{\sigma-uwDP}(E; F) \subset \mathcal{L}_{\sigma-uaw}(E; F) \subset \mathcal{L}_{\sigma-un}(E; F)
\end{equation*}
whenever $E$ end $F$ are Banach lattices and $F$ has the Schur property.  Let's see then that 
\begin{equation*}
\mathcal{L}_{\sigma-un}(E; F) \subset \mathcal{L}_{\sigma-uwDP}(E; F).
\end{equation*}
Let $T \in \mathcal{L}_{un}(E; F)$ and $(x_j)_{j=1}^{\infty}$ in $E$ with $x_j \xrightarrow{uaw} x$ in $E$. Considere also $(f_j)_{j=1}^{\infty}$ in $(F^*)^+$ with $f_j \xrightarrow{w} f$ in $F^*$. As $E$ has norm order continuous, by \cite[Theorem $4$]{Z18}, $x_j \xrightarrow{un} x$. So, $T(x_j) \xrightarrow{un} T(x)$. Then, for every $u \in F^+$
\begin{align*}
|f_j(|T(x_j)|\wedge u) - f(|T(x)|\wedge u)| &\le |f_j\left( (|T(x_j)| - |T(x)|)\wedge u \right)| + |(f_j - f)(|T(x)|\wedge u)|\\
&\le \|f_j\| \||T(x_j) - T(x)|\wedge u\| + |(f_j - f)(|T(x)|\wedge u)|.
\end{align*}
So, 
\begin{equation*}
f_j(|T(x_j)|\wedge u) \to f(|T(x)|\wedge u)
\end{equation*}
for any $u \in F^+$. Hence, $T \in \mathcal{L}_{\sigma-uwDP}(E; F)$.
\end{proof}

It is important to note that for every Banach lattices $E, F$ and $G$, if $S \in \mathcal{L}_{uaw}(E; F)$ and $T \in \mathcal{L}_{uwDP}(F; G)$, then $T\circ S \in \mathcal{L}_{uwDP}(E; G)$. Indeed, let $(x_{\alpha})_{\alpha}$ in $E$ and $(f_{\alpha})_{\alpha}$ in $(G^*)^+$ with $x_{\alpha} \xrightarrow{uaw} x$ in $E$ and $f_{\alpha} \xrightarrow{w} f$ in $G^*$. Since $x_{\alpha} \xrightarrow{uaw} x$ then $S(x_{\alpha}) \xrightarrow{uaw} S(x)$ and, for any $u \in G^+$
\begin{equation*}
f_{\alpha}(|T\circ S(x_{\alpha})|\wedge u) - f(|T\circ S(x)| \wedge u) = f_{\alpha}(|T(S(x_{\alpha}))|\wedge u) - f(|T(S(x))| \wedge u) \to 0.
\end{equation*}
Hence, $T\circ S \in \mathcal{L}_{uwDP}(E; G)$.
We can also easily see that if $S \in \mathcal{L}_{uaw}(F; G)$ and $T \in \mathcal{L}_{uDP}(E; F)$ then $S \circ T \in \mathcal{L}_{uDP}(E; G)$.

\end{document}